\newcommand{\me}{\mathbb{E}}
\newcommand{\mr}{\mathbb{R}}
\newcommand{\mn}{\mathbb{N}}
\newcommand{\mmp}{\mathbb{P}}
\DeclareMathOperator{\1}{\mathbbm{1}}
\newtheorem{thm}{Theorem}[section]
\newtheorem{lemma}[thm]{Lemma}
\newtheorem{example}[thm]{Example}
\newtheorem{prop}[thm]{Proposition}
\theoremstyle{definition}
\theoremstyle{remark}
\newtheorem{rem}[thm]{Remark}
\newcommand{\eee}{{\rm e}}
\begin{document}

\title{Limit theorems for decoupled renewal processes}\date{}
\author{Congzao Dong\footnote{School of Mathematics and Statistics, Xidian University, Xi’an, China; e-mail address: czdong@xidian.edu.cn} \ \ Iryna Feshchenko\footnote{Faculty of Computer Science and Cybernetics, Taras Shevchenko National University of Kyiv, 01601 Kyiv, Ukraine; e-mail address: irynafeshchenko@knu.ua}  \ \ and \ \ Alexander Iksanov\footnote{Faculty of Computer Science and Cybernetics, Taras Shevchenko National University of Kyiv, 01601 Kyiv, Ukraine; e-mail address:
iksan@univ.kiev.ua}}
\maketitle
\begin{abstract}
\noindent The decoupled standard random walk is a sequence of independent random variables $(\hat S_n)_{n\geq 1}$, in which $\hat S_n$ 
has the same distribution as the position at time $n$ of a standard random walk with nonnegative jumps. Denote by $\hat N(t)$ the number of elements of the decoupled standard random walk which do not exceed $t$. The random process $(\hat N(t))_{t\geq 0}$ is called decoupled renewal process. Under the assumption that $t\mapsto \mmp\{\hat S_1>t\}$ 
is regularly varying at infinity of nonpositive index larger than $-1$ we prove a functional central limit theorem in the Skorokhod space equipped with the $J_1$-topology for the decoupled renewal processes, properly scaled, centered and normalized. Also, under the assumption that $t\mapsto \mmp\{\hat S_1>t\}$ 
is regularly varying at infinity of index $-\alpha$, $\alpha\in [0,1)\cup (1,2)$ or the distribution of $\hat S_1$ 
belongs to the domain of attraction of a normal distribution we prove a law of the iterated or single logarithm for $\hat N(t)$, again properly normalized and centered. As an application, we obtain a law of the single logarithm for the number of atoms of a determinantal point process with the Mittag-Leffler kernel, which lie in expanding discs.
\end{abstract}

\noindent Key words: decoupled random walk, determinantal point process with the Mittag-Leffler kernel, functional limit theorem, law of the iterated logarithm

\noindent 2000 Mathematics Subject Classification: Primary: 60F15,60F17 \\
\hphantom{2000 Mathematics Subject Classification: } Secondary: 60G50

\section{Introduction}

Let $\xi_1$, $\xi_2,\ldots$ be independent copies of a nonnegative random variable $\xi$ with a nondegenerate distribution. Put $S_n=\xi_1+\ldots+\xi_n$ for $n\in\mn:=\{1,2,\ldots\}$ and then $N(t):=\sum_{n\geq 1}\1_{\{S_n\leq t\}}$ for $t\geq 0$. The random sequence $(S_n)_{n\geq 1}$ is called {\it standard random walk} with nonnegative jumps and the random process $(N(t))_{t\geq 0}$ is called {\it renewal process}. Let $\hat S_1$, $\hat S_2,\ldots$ be independent random variables such that, for each $n\in\mn$, $\hat S_n$ has the same distribution as $S_n$. Put $\hat N(t):=\sum_{n\geq 1}\1_{\{\hat S_n\leq t\}}$ for $t\geq 0$. Following \cite{Alsmeyer+Iksanov+Kabluchko:2025}, we call the sequence $(\hat S_n)_{n\geq 1}$ {\it decoupled standard random walk} and the process $(\hat N(t))_{t\geq 0}$ {\it decoupled renewal process}. 

Our interest in decoupled standard random walks has been raised by their recent appearance in connection with particular determinantal point processes. Now we provide some details. Let $\mathbb{C}$ denote the set of complex numbers. For $\rho>0$, define the kernel $C_\rho$ by $$C_\rho(z,w)=\frac{\rho}{2\pi}{\rm E}_{2/\rho,\,2/\rho}(z\bar w)\eee^{-|z|^\rho/2-|w|^\rho/2},\quad z,w\in \mathbb{C}.$$ Here, $\bar w$ denotes the complex conjugate of $w$ and, for $a,b>0$, ${\rm E}_{a,\,b}$ denotes the Mittag-Leffler function with parameters $a$ and $b$ given by $${\rm E}_{a,\,b}(z):=\sum_{k\geq 0}\frac{z^k}{\Gamma(ak+b)},\quad z\in \mathbb{C},$$ and $\Gamma$ is the Euler gamma-function. Denote by $\Theta_\rho$ a simple point process on $\mathbb{C}$ such that, for any $k\in\mn$ and any mutually disjoint Borel subsets $B_1,\ldots, B_k$ of $\mathbb{C}$, $$\me \Big[ \prod_{j=1}^k \Theta_\rho(B_j)\Big]=\int_{B_1\times\ldots\times B_k} {\rm det}(C_\rho(z_i, z_j))_{1\leq i,j\leq k}\,{\rm d}z_1\ldots {\rm d}z_k,$$ where ${\rm det}$ denotes the determinant. The point process $\Theta_\rho$ is a {\it determinantal point process} with kernel $C_\rho$ with respect to Lebesgue measure on $\mathbb{C}$, see \cite{Hough+Krishnapur+Peres+Virag:2009}. The process $\Theta_2$ (which corresponds to $\rho=2$) is known in the literature as the {\it infinite Ginibre point process}, see Sections 4.3.7 and 4.7 in the book \cite{Hough+Krishnapur+Peres+Virag:2009} and Part I of the monograph \cite{Byun+Forrester:2025}.

The set of absolute values of atoms of $\Theta_\rho$ has the same distribution as $((\hat S_n)^{1/\rho})_{n\geq 1}$, where $\hat S_1$ has the gamma distribution with parameters $2/\rho$ and $1$, that is,
\begin{equation}\label{eq:gamma}
\mmp\{\hat S_1\in{\rm d}x\}=\frac{1}{\Gamma(2/\rho)}x^{2/\rho-1}\eee^{-x}\1_{(0,\infty)}(x){\rm d}x,
\end{equation}
see pp.~3-4 in \cite{Adhikari:2018}. For each $t\geq 0$, let $\Theta_\rho(D_t)$ denote the number of atoms of $\Theta_\rho$ inside the disk $D_t:=\{z\in \mathbb{C}: |z|<t\}$. Then
\begin{equation}\label{eq:principal}
(\Theta_\rho(D_t))_{t\geq 0}~~\text{has the same distribution as}~~(\hat N(t^\rho))_{t\geq 0}=\Big(\sum_{n\geq 1}\1_{\{\hat S_n\leq t^\rho\}}\Big)_{t\geq 0}
\end{equation}
with $\hat S_1$ as in \eqref{eq:gamma}.

Proposition 1.4 in \cite{Fenzl+Lambert:2022} is a functional central limit theorem (FCLT) in the Skorokhod space for $(\Theta_2(D_t))_{t\geq 0}$, properly scaled, centered, and normalized. Equivalently, it is an FCLT for the decoupled renewal process $(\hat N(t))_{t\geq 0}$, which corresponds to $\xi$ having the exponential distribution of unit mean. Theorem 2.1 in \cite{Alsmeyer+Iksanov+Kabluchko:2025} is an FCLT for $(\hat N(t))_{t\geq 0}$ under the assumption that the distribution of $\xi$ belongs to the domain of attraction of a stable distribution with finite mean (cases (A), (B) and (C) in Section \ref{sect:prelim}). In the present article we prove an FCLT for $(\hat N(t))_{t\geq 0}$ under the assumption that the right distribution tail of $\xi$ is regularly varying at $\infty$ of nonpositive index larger than $-1$ (case (D) in Section \ref{sect:prelim}). As far as tightness of distributions was concerned (recall that an FLT is equivalent to weak convergence of finite-dimensional distributions plus tightness), Theorem 2.1 in \cite{Alsmeyer+Iksanov+Kabluchko:2025} required the assumption that the function $t\mapsto \sum_{n\geq 1}\mmp\{S_n\leq t\}$ is Lipschitz continuous on $[0,\infty)$. Here, we show that this assumption can be dispensed with.

Also, we prove a law of the single logarithm (LSL) for $\hat N(t)$ under the assumption that the distribution of $\xi$ belongs to the domain of attraction of a stable distribution with finite mean and a law of the iterated logarithm (LIL) under the assumption that the right distribution tail of $\xi$ is regularly varying at $\infty$ of nonpositive index larger than $-1$. These results are derived by an application of Theorem 1.6 in \cite{Buraczewski+Iksanov+Kotelnikova:2025}, which provides sufficient conditions ensuring that an LSL or an LIL holds for infinite sums of independent indicators parameterized by $t$, as $t\to\infty$.

\section{Main results}

\subsection{Preliminaries}\label{sect:prelim}

In what follows $\ell$ denotes a function slowly varying at $\infty$, that is, $\lim_{t\to\infty}(\ell(\lambda t)/\ell(t))=1$ for each $\lambda>0$. As usual, $f(t)\sim g(t)$ as $t\to A$ will mean that $\lim_{t\to A}(f(t)/g(t))=1$.

Put $\tau(t):=\inf\{k\geq 1: S_k>t\}$ for $t\geq 0$ and observe that $\tau(t)=N(t)+1$ almost surely (a.s.). We recall known facts concerning distributional convergence of $\tau(t)$, properly normalized, with or without centering. According to Proposition A.1 in \cite{Gnedin etal:2009} and a remark following it,
\begin{equation}\label{eq:basic conv}
\frac{\tau(t)-\mu^{-1} t}{\mu^{-1-1/\alpha}c_\alpha(t)}~{\overset{{\rm d}}\longrightarrow}~ Z_\alpha,\quad t\to\infty,
\end{equation}
where $\mu:=\me [\xi]<\infty$, provided that

\noindent (A) $\sigma^2={\rm Var}[\xi]\in (0,\infty)$; in which case $\alpha=2$, $c_2(t)=\sigma t^{1/2}$, and $Z_2$ is a random variable with the standard normal distribution;

\noindent (B) $\sigma^2=\infty$ and $\me[\xi^2 \1_{\{\xi \leq t\}}] \sim \ell(t)$ as $t \to \infty$; in which case $\alpha=2$, $c_2$ is any positive function such that $$\lim_{t\to\infty}\frac{t\ell (c_2(t))}{(c_2(t))^2}=1.$$

\noindent (C) $\mathbb{P}\{\xi > t\} \sim t^{-\alpha}\ell(t)$ as $t\to\infty$ for some $\alpha \in (1,2)$; in which case $c_\alpha$ is any positive function such that $$\lim_{t\to\infty} \frac{t\ell(c_\alpha(t))}{(c_\alpha(t))^\alpha} =1,$$ and $Z_\alpha$ is a random variable having the spectrally negative $\alpha$-stable distribution with the characteristic function
\begin{multline*}
\mathbb{E} [\exp({\rm i} v Z_\alpha)] \ = \ \exp\{|v|^\alpha (\alpha-1)^{-1}\Gamma(2-\alpha)(\cos(\pi\alpha/2)+{\rm i}\sin(\pi\alpha/2){\rm sign}\,(v))\},\quad v\in\mr,
\end{multline*}
where $\Gamma$ is the Euler gamma-function. It is known that, for $\alpha\in (1,2]$, the functions $c_\alpha$ do exist and vary regularly at $\infty$ of index $1/\alpha$, see, for instance, Lemma 6.1.3 on p.~209 in \cite{Iksanov:2016}. 

According to the aforementioned Proposition A.1 in \cite{Gnedin etal:2009},
\begin{equation}\label{eq:basic conv2}
\mmp\{\xi>t\} \tau(t)~{\overset{{\rm d}}\longrightarrow}~ Z_\alpha,\quad t\to\infty
\end{equation}
provided that

\noindent (D) $\mathbb{P}\{\xi > t\} \sim t^{-\alpha}\ell(t)$ as $t\to\infty$ for some $\alpha \in [0,1)$; in which case $Z_\alpha$ has the Mittag-Leffler distribution with parameter $\alpha$ (exponential of unit mean, if $\alpha=0$) characterized by its moments $$\me [(Z_\alpha)^n]=\frac{n!}{(\Gamma(1-\alpha))^n\Gamma(1+\alpha n)},\quad n\in\mn.$$

We refrain from discussing here the scenario where $\mmp\{\xi > t\} \sim t^{-1}\ell(t)$ as $t\to\infty$.

\subsection{Functional central limit theorems}

Throughout this section we treat the case (D). Put $G(t):=1/\mmp\{\xi>t\}$ for $t\geq 0$. For $\alpha\in [0,1)$, denote by $h_\alpha$ any nonnegative nondecreasing function satisfying $G(h_\alpha(t)) \sim \eee^t$ as $t\to\infty$. If $\alpha\in (0,1)$, the existence of such a function is secured by Theorem 1.5.12 in \cite{Bingham+Goldie+Teugels:1989}. If $\alpha=0$, put $h_0(t)=G^\leftarrow(\eee^t)$, where $G^\leftarrow(t):=\inf\{t\geq 0: G(t)>s\}$ for $s>1$. In the second part of the proof of Theorem 1.5.12 in \cite{Bingham+Goldie+Teugels:1989} the relation $f(f^\leftarrow(t))\sim t$ as $t\to\infty$ is derived for $f$ regularly varying at $\infty$ of positive index. However, that proof works equally well for slowly varying $f$. By this reasoning, we obtain $G(h_0(t))=G(G^\leftarrow(\eee^t))\sim \eee^t$ as $t\to\infty$.

Put $V(t):=\me [\hat N(t)]=\sum_{k\geq 1}\mmp\{\hat S_k\leq t\}=\sum_{k\geq 1}\mmp\{S_k\leq t\}$ for $t\geq 0$. Thus, $V+1$ is a renewal function. Let $I$ be an interval, finite or infinite. Denote by $D(I)$ the Skorokhod space of c\`{a}dl\`{a}g functions defined on $I$. We assume that the Skorokhod space is endowed with the $J_1$-topology, see \cite{Billingsley:1968}. We write $\Longrightarrow$ for weak convergence in a function space. As usual, $x\wedge y=\min (x,y)$ and $x\vee y=\max (x,y)$ for $x,y\in\mr$.

\begin{thm}\label{main}
Suppose (D). Then $$\big(\eee^{-t/2}\big(\hat N(h_\alpha(t+u))-V(h_\alpha(t+u)))\big)_{u\in\mr}\ \Longrightarrow \ X_\alpha,\quad t\to\infty$$ 
on $D(R)$, where $X_{\alpha}=(X_{\alpha}(u))_{u\in\mr}$ is a centered 
Gaussian process with covariance function
\begin{equation*} 
{\rm Cov}\,(X_{\alpha}(u),X_{\alpha}(v))\ =\ \int_0^\infty \mmp\{Z_\alpha> \eee^{-(u\wedge v)}y\}\mmp\{Z_\alpha\le\eee^{-(u\vee v)y}\}{\rm d}y
\end{equation*}
for $u,v\in\mr$.
\end{thm}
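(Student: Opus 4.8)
\emph{Setup and one--dimensional inputs.}
The key structural fact is that, for each fixed $t$ and with $p_n(t,u):=\mmp\{S_n\le h_\alpha(t+u)\}=\mmp\{\tau(h_\alpha(t+u))>n\}$,
$$\eee^{-t/2}\big(\hat N(h_\alpha(t+u))-V(h_\alpha(t+u))\big)=\eee^{-t/2}\sum_{n\ge 1}\big(\1_{\{\hat S_n\le h_\alpha(t+u)\}}-p_n(t,u)\big)$$
is a sum of independent, centered, uniformly bounded random variables, and that $u\mapsto\1_{\{\hat S_n\le h_\alpha(t+u)\}}$ is nondecreasing. Put $F_t(u):=\eee^{-t}V(h_\alpha(t+u))=\eee^{-t}\sum_{n\ge1}p_n(t,u)$. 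A Karamata Tauberian argument for the Laplace transform of the renewal measure (equivalently, uniform integrability in \eqref{eq:basic conv2}, which follows from the classical bound $\me[(N(s))^2]\le C(1+V(s))^2$ for renewal counting functions, see e.g.\ \cite{Iksanov:2016}) gives $V(s)\mmp\{\xi>s\}\to\me[Z_\alpha]$ as $s\to\infty$; combined with $G(h_\alpha(s))\sim\eee^s$ this yields $F_t(u)\to F(u):=\me[Z_\alpha]\eee^u$ pointwise, the convergence being uniform on compact $u$--sets by P\'olya's theorem since $F$ is continuous and increasing and each $F_t$ is nondecreasing. The same ingredients show that $\{\mmp\{\xi>h_\alpha(t+u)\}\,\tau(h_\alpha(t+u)):t\ge 0,\ u\in[-K,K]\}$ is bounded in $L^2$ for every $K$.

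\emph{Finite--dimensional distributions.}
By the Cram\'er--Wold device it suffices to treat linear combinations $\eee^{-t/2}\sum_{n\ge 1}\sum_j\theta_j(\1_{\{\hat S_n\le h_\alpha(t+u_j)\}}-p_n(t,u_j))$, which are sums of independent centered summands bounded in modulus by $\eee^{-t/2}\sum_j|\theta_j|\to 0$; the Lindeberg condition is automatic and only the limiting variance must be identified, i.e.\ one must show $\eee^{-t}\,{\rm Cov}(\hat N(h_\alpha(t+u)),\hat N(h_\alpha(t+v)))\to{\rm Cov}(X_\alpha(u),X_\alpha(v))$, which by symmetry we prove for $u\le v$. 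Since the indicators are nested, this covariance equals $\sum_{n\ge 1}p_n(t,u)(1-p_n(t,v))$; setting $n=\lceil y\eee^t\rceil$ represents $\eee^{-t}$ times this sum as $\int_0^\infty$ of a step function whose value at $y$ is $p_{\lceil y\eee^t\rceil}(t,u)(1-p_{\lceil y\eee^t\rceil}(t,v))$. By the first paragraph and continuity of the law of $Z_\alpha$ this value converges, for each $y>0$, to $\mmp\{Z_\alpha>\eee^{-u}y\}\mmp\{Z_\alpha\le\eee^{-v}y\}$, while the $L^2$--bound together with Markov's inequality yields the $t$--uniform integrable majorant $p_{\lceil y\eee^t\rceil}(t,u)\le\min\{1,\,C\eee^{2u}y^{-2}\}$. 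Dominated convergence then produces the covariance stated in the theorem (which, being a limit of genuine covariance matrices, is positive semidefinite, so $X_\alpha$ is well defined).

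\emph{Tightness.}
It suffices to establish tightness of the restrictions to $D([-K,K])$ for every $K>0$; since the limit will be supported on continuous paths (its covariance satisfies $\me[(X_\alpha(u)-X_\alpha(v))^2]\le\me[Z_\alpha]|\eee^u-\eee^v|$, so Kolmogorov's criterion applies), weak convergence on $D(\mr)$ will follow. For $-K\le r\le u\le s\le K$ write $\Delta_1:=\eee^{-t/2}\sum_n\eta_n$, $\Delta_2:=\eee^{-t/2}\sum_n\zeta_n$ with $\eta_n:=\1_{\{h_\alpha(t+r)<\hat S_n\le h_\alpha(t+u)\}}-p^{(n)}_I$ and $\zeta_n:=\1_{\{h_\alpha(t+u)<\hat S_n\le h_\alpha(t+s)\}}-p^{(n)}_J$; for each $n$ the two indicators are supported on disjoint events, and the pairs are independent over $n$. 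Expanding $\me[\Delta_1^2\Delta_2^2]$, only pairings of the four indices (within or across the two factors) survive, and invoking $\me[\eta_n^2]\le p^{(n)}_I$, $\me[\eta_n\zeta_n]=-p^{(n)}_Ip^{(n)}_J$, $\sum_np^{(n)}_Ip^{(n)}_J\le\min\{\sum_np^{(n)}_I,\sum_np^{(n)}_J\}$, $\me[\eta_n^2\zeta_n^2]\le p^{(n)}_Ip^{(n)}_J$ and $\sum_np^{(n)}_I=\eee^t(F_t(u)-F_t(r))$ (likewise for $J$), one arrives at $\me[\Delta_1^2\Delta_2^2]\le C(F_t(s)-F_t(r))^2+C\eee^{-t}$ with $C$ independent of $t,r,u,s$. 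Feeding this into Billingsley's chaining estimate for the c\`adl\`ag modulus of continuity \cite{Billingsley:1968}, with $F_t$ playing the role of the modulus function: over a partition of $[-K,K]$ into $O(\delta^{-1})$ windows of length $\le\delta$, the contribution of $(F_t(s)-F_t(r))^2$ is controlled by the oscillation of $F_t$ over length--$\delta$ windows, whose $\limsup$ in $t$ equals the oscillation of the continuous limit $F$ and tends to $0$ as $\delta\downarrow 0$; the residual $C\eee^{-t}$, summed over $O(\delta^{-1})$ windows, contributes $O(\eee^{-t}\delta^{-1})\to 0$ for each fixed $\delta$. Together with $\me[(\eee^{-t/2}(\hat N(h_\alpha(t\pm K))-V(h_\alpha(t\pm K))))^2]\le F_t(\pm K)\le C$, this yields tightness on $D([-K,K])$.

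\emph{Where the difficulty lies.}
The whole weight of the argument is on the tightness step. The compensator $V(h_\alpha(t+\cdot))$ is merely nondecreasing and carries no modulus of continuity, which is precisely why Theorem~2.1 in \cite{Alsmeyer+Iksanov+Kabluchko:2025} had to assume $t\mapsto\sum_{n\ge1}\mmp\{S_n\le t\}$ Lipschitz. What lets one dispense with that hypothesis is the exact shape of the fourth--moment bound above: the error term is not of the damaging size $O(F_t(s)-F_t(r))$ (which would be useless for small increments) but a genuinely small constant $O(\eee^{-t})$, hence it survives multiplication by the $O(\delta^{-1})$ number of windows once the limit $t\to\infty$ is taken first. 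The remaining points (the Tauberian limit $V(s)\mmp\{\xi>s\}\to\me[Z_\alpha]$, the dominated--convergence computation of the covariance, and the bookkeeping in the moment expansion) are routine.
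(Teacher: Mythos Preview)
Your proof is correct and follows the paper's route exactly: finite-dimensional convergence via the Cram\'er--Wold device and the Lindeberg--Feller theorem (the summands are bounded by $\eee^{-t/2}\sum_j|\theta_j|$, so Lindeberg is trivial and only the covariance asymptotics of Lemma~\ref{lem:cov} matter), and tightness via the fourth-moment expansion of $\me[\Delta_1^2\Delta_2^2]$ fed into Theorem~15.6 of \cite{Billingsley:1968}. The only divergence is your estimate of the diagonal term $\sum_n\me[\eta_n^2\zeta_n^2]\le\sum_n p_I^{(n)}p_J^{(n)}$: you bound this by $\min\{\sum_n p_I^{(n)},\sum_n p_J^{(n)}\}$, which after dividing by $\eee^{2t}$ produces the residual $C\eee^{-t}$ and forces the ad~hoc chaining you describe, whereas the paper uses the equally elementary $\sum_n p_I^{(n)}p_J^{(n)}\le\big(\sum_n p_I^{(n)}\big)\big(\sum_n p_J^{(n)}\big)$ to absorb this term into the main contribution and arrive at the clean bound $\me[\Delta_1^2\Delta_2^2]\le C(\eee^w-\eee^u)^2$; Theorem~15.6 (with the remark following it) then applies directly with the fixed continuous modulus $u\mapsto\eee^u$, so the $\eee^{-t}$ term you single out as ``where the difficulty lies'' is in fact not needed.
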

\begin{rem}
For $\alpha\in [0,1)$, put $Y_\alpha(u):= \eee^{-u/2}X_\alpha(u)$ for $u\in\mr$. The process $(Y_\alpha(u))_{u\in\mr}$ is stationary Gaussian. The process $(Y_0(u))_{u\in\mr}$ with the covariance $${\rm Cov}\,(Y_0(u), Y_0(v))=\frac{1}{2\cosh(u-v)},\quad u,v\in\mr$$ and its time-changed version arise naturally in various contexts, see \cite{Kabluchko:2019} and references therein, and \cite{Buraczewski+Dovgay+Iksanov:2020, Iksanov+Kabluchko:2018, Iksanov+Nikitin+Samoilenko:2020}.
\end{rem}

\subsection{Laws of the iterated or single logarithm}

Below we refer to conditions (A)-(D) introduced in Section \ref{sect:prelim}.

\begin{thm}\label{main1}
Suppose (A). Then
\begin{equation*} 
\limsup_{t\to \infty} \frac{\hat{N}(t) - V(t)}{t^{1/4}(\log t)^{1/2}} = \left( \frac{\sigma^2}{\mu^3 \pi} \right)^{1/4}\quad \text{{\rm a.s.}},
\end{equation*}
where $\mu=\me [\xi]<\infty$ and $\sigma^2={\rm Var}[\xi]\in (0,\infty)$.

\noindent Suppose (B). Then
\begin{equation*}
\limsup_{t\to \infty} \frac{\hat{N}(t) - V(t)}{(c_2(t) \log t)^{1/2}} =\left( \frac{1}{\mu^3 \pi} \right)^{1/4}\quad \text{{\rm a.s.}}
\end{equation*}

\noindent Suppose (C). Then
\begin{multline*}
\limsup_{t\to \infty} \frac{\hat{N}(t) - V(t)}{(c_\alpha (t)\log t)^{1/2}}= \left(\frac{2(\alpha - 1)\Gamma(1-1/\alpha)}{\alpha \mu^{1+1/\alpha}\pi}\left(\frac{-2\Gamma(2-\alpha)\cos(\pi \alpha/2)}{\alpha-1}\right)^{1/\alpha}\right)^{1/2}\quad \text{{\rm a.s.}},
		\end{multline*}
where $\Gamma$ is the Euler gamma-function.

\noindent Suppose (D). Then
\begin{equation}\label{eq:lil heavy}
\limsup_{t\to \infty}\Big(\frac{\mmp\{\xi>t\}}{\log\log t}\Big)^{1/2}(\hat{N}(t) - V(t))= \left( 2\int_0^\infty \mmp\{Z_\alpha > y\}\mmp\{Z_\alpha \leq y\}{\rm d}y \right)^{1/2}\quad \text{{\rm a.s.}}
\end{equation}

All the limit relations hold true, with $-\liminf$ replacing $\limsup$.
\end{thm}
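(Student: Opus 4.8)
The plan is to deduce Theorem~\ref{main1} from Theorem~1.6 in \cite{Buraczewski+Iksanov+Kotelnikova:2025}, which is stated as a general LSL/LIL for sums of the form $\sum_{n\geq 1}\1_{\{\hat S_n\leq t\}}$ with independent indicators. The decoupled renewal process $\hat N(t)=\sum_{n\geq 1}\1_{\{\hat S_n\leq t\}}$ has exactly this structure, so the task reduces to verifying the hypotheses of that theorem and identifying the constants. The key probabilistic input is the convergence of $\tau(t)=N(t)+1$, equivalently of $\hat N(t)$ read through the renewal function, under conditions (A)--(D): in cases (A), (B), (C) we have the Gaussian/stable limit \eqref{eq:basic conv} with normalization $\mu^{-1-1/\alpha}c_\alpha(t)$, and in case (D) we have the Mittag--Leffler limit \eqref{eq:basic conv2} with normalization $G(t)=1/\mmp\{\xi>t\}$. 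Because the indicators defining $\hat N$ are independent, the variance of $\hat N(t)$ is $\mathrm{Var}\,\hat N(t)=\sum_{n\geq1}\mmp\{\hat S_n\leq t\}\mmp\{\hat S_n>t\}=\me[(N(t)\wedge(\text{tail complement}))]$, which one rewrites via the layer-cake / Fubini identity as $\int_0^\infty \mmp\{\hat N(t)>\lfloor y\rfloor\}\mmp\{\hat N(t)\le\lfloor y\rfloor\}\,\mathrm dy$ up to lower-order corrections, and then passes to the limit using \eqref{eq:basic conv} or \eqref{eq:basic conv2} to get the explicit asymptotic variance appearing on the right-hand sides.

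Concretely, I would proceed as follows. First, identify the centering: $V(t)=\me[\hat N(t)]$ is exactly the renewal function minus one, so $\hat N(t)-V(t)$ is already centered. Second, compute the asymptotic variance $v(t):=\mathrm{Var}\,\hat N(t)$. Using the independence of the indicators and the identity $\mathrm{Var}\,\hat N(t)=\sum_{n\geq1}\mmp\{S_n\le t\}(1-\mmp\{S_n\le t\})$, rewrite this via $\sum_{n\ge1}p_n(1-p_n)=\int_0^\infty \mmp\{N(t)> y\}\,\mathrm dy - (\text{something})$; more cleanly, use $\mathrm{Var}\,\hat N(t)=\int_0^\infty\big(\mmp\{\tau(t)>y\}-\mmp\{\tau(t)>y\}^2\big)\mathrm dy=\int_0^\infty \mmp\{\tau(t)>y\}\mmp\{\tau(t)\le y\}\,\mathrm dy$ after a discrete-to-continuous reduction. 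In case (D), substitute $y=G(t)x$ and use $\mmp\{\xi>t\}\tau(t)\Rightarrow Z_\alpha$ together with uniform integrability (the tails of $\mmp\{\xi>t\}\tau(t)$ are controlled because $\tau(t)$ is a first-passage time and $Z_\alpha$ has all moments) to obtain $v(t)\sim G(t)\cdot 2\int_0^\infty\mmp\{Z_\alpha>y\}\mmp\{Z_\alpha\le y\}\,\mathrm dy$; the factor $2$ comes from symmetrizing $\int_0^\infty \mmp\{Z_\alpha>x\}\mmp\{Z_\alpha\le x\}\,\mathrm dx = \int_0^\infty\mmp\{Z_\alpha>x\}(1-\mmp\{Z_\alpha>x\})\,\mathrm dx$ versus the $2\int$ form used in \eqref{eq:lil heavy}; I would double-check this bookkeeping carefully. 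In cases (A)--(C), the same substitution with normalization $\mu^{-1-1/\alpha}c_\alpha(t)$ and limit law $Z_\alpha$ produces $v(t)\sim \mu^{-1-1/\alpha}c_\alpha(t)\cdot\kappa_\alpha$, where $\kappa_\alpha=\int_{-\infty}^\infty \mmp\{Z_\alpha>x\}\mmp\{Z_\alpha\le x\}\,\mathrm dx=\int_{-\infty}^\infty\mmp\{Z_\alpha>x\}(1-\mmp\{Z_\alpha>x\})\,\mathrm dx$; for $\alpha=2$ this integral is $1/\sqrt\pi$ (Gaussian), which reproduces the constants $(\sigma^2/(\mu^3\pi))^{1/4}$ and $(1/(\mu^3\pi))^{1/4}$, and for $\alpha\in(1,2)$ one computes it in terms of the spectrally-negative stable density to match the displayed $\Gamma$-function expression.

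Third, invoke Theorem~1.6 in \cite{Buraczewski+Iksanov+Kotelnikova:2025}: having established that $v(t)$ is regularly varying (of index $1/2$ in cases (A),(B), of index $1/(2\alpha)\cdot$ — rather, of the appropriate positive index dictated by $c_\alpha$ — in case (C), and slowly varying, of index $\alpha$ through $\mmp\{\xi>t\}^{-1}$, in case (D)) and growing to infinity, and that the normalized process converges in distribution, the cited theorem yields
\begin{equation*}
\limsup_{t\to\infty}\frac{\hat N(t)-V(t)}{\sqrt{2\,v(t)\log\log t}}=1\quad\text{a.s.}
\end{equation*}
in the ``slowly varying'' case (D) — an LIL with $\log\log$ normalization — and the analogous LSL with $\sqrt{2\,v(t)\log t}$ in cases (A)--(C) where $v(t)$ grows polynomially (or near-polynomially). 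Substituting the asymptotics of $v(t)$ computed in step two, and simplifying, produces each displayed constant. The statement for $-\liminf$ follows from the same theorem applied to $-(\hat N(t)-V(t))$, which is symmetric in distribution in the relevant limiting sense. The main obstacle I anticipate is twofold: (i) verifying the precise hypotheses of Theorem~1.6 in \cite{Buraczewski+Iksanov+Kotelnikova:2025} — in particular whether that theorem requires a rate of convergence, a regular-variation condition on $v$, or a monotonicity/continuity assumption on $t\mapsto V(t)$ (the excerpt notes that the Lipschitz assumption needed elsewhere is dispensable, which suggests some care is required here too) — and (ii) the exact evaluation of the limiting variance integrals $\kappa_\alpha$ in the stable case (C), which requires knowing the spectrally-negative $\alpha$-stable distribution function well enough to integrate $\mmp\{Z_\alpha>x\}(1-\mmp\{Z_\alpha>x\})$ in closed form and match it against $\tfrac{2(\alpha-1)\Gamma(1-1/\alpha)}{\alpha\pi}\big(\tfrac{-2\Gamma(2-\alpha)\cos(\pi\alpha/2)}{\alpha-1}\big)^{1/\alpha}$; this is a delicate but purely analytic computation, likely done via the Mellin transform of the stable density or via a known formula for $\int_0^\infty x^{s-1}F(x)(1-F(x))\,\mathrm dx$.
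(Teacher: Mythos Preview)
Your overall strategy---apply Theorem~1.6 of \cite{Buraczewski+Iksanov+Kotelnikova:2025} to $\hat N(t)=\sum_{n\geq 1}\1_{\{\hat S_n\leq t\}}$ after identifying the asymptotic variance---is exactly what the paper does. However, your proposal substantially underestimates what that theorem actually requires, and this is where the real work lies.

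Theorem~1.6 in \cite{Buraczewski+Iksanov+Kotelnikova:2025} is not a black box that takes regular variation of $v(t)$ plus distributional convergence and outputs an LIL/LSL. It requires verifying a list of seven structural conditions (labelled (A1)--(A5), (B1), (B2) in the paper), and two of them are genuinely nontrivial here. Condition~(A5) asks for a partition $t_n=t_{0,n}<\ldots<t_{j_n,n}=t_{n+1}$ along which the increments of $V$ are bounded between $1$ and a fixed constant $A$; since $V$ can have jumps, this cannot be done directly, and the paper introduces an auxiliary smoothed function $W(t)=\int_0^t V(t-y)\eee^{-y}{\rm d}y$, proves it is strictly increasing and continuous (Lemma~\ref{uv_mon}), builds the partition from $W$, and then transfers the bounds back to $V$ via subadditivity of the renewal function. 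Condition~(B2) requires exhibiting, for each $\varsigma\in(0,1)$, index sets $R_\varsigma(t)$ capturing a $(1-\varsigma)$-fraction of the variance \emph{and} disjoint along the subsequence $(\tau_n)$; the paper constructs these as windows $[c(t),d(t)]$ around $\mu^{-1}t$ (cases (A)--(C)) or around $G(t)$ (case~(D)), and the disjointness check is a separate regular-variation argument on the growth of $\tau_{n+1}-\tau_n$. None of this appears in your sketch.

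There is also a parameter you have missed: the theorem involves a quantity $\nu\geq 1$ defined through the growth of $\me[\hat N(t)]$ relative to ${\rm Var}[\hat N(t)]$, and the normalizer is $(2(\nu-1){\rm Var}\,\log{\rm Var})^{1/2}$ when $\nu>1$ and $(2(q+1){\rm Var}\,\log\log{\rm Var})^{1/2}$ when $\nu=1$. In cases (A), (B) one has $\nu=2$; in case~(C), $\nu=\alpha$; in case~(D), $\nu=1$ and $q=0$. The factor $2(\alpha-1)/\alpha$ in the case-(C) constant comes precisely from $2(\nu-1)$ combined with $\log{\rm Var}[\hat N(t)]\sim\alpha^{-1}\log t$; your proposal, which writes $\sqrt{2v(t)\log t}$ uniformly, would get the wrong constant in case~(C). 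Finally, your worry about a ``factor of~2 from symmetrizing'' in case~(D) is misplaced: the variance asymptotic is ${\rm Var}[\hat N(t)]\sim G(t)\int_0^\infty\mmp\{Z_\alpha>y\}\mmp\{Z_\alpha\leq y\}{\rm d}y$ with no extra~$2$, and the~$2$ in \eqref{eq:lil heavy} is simply the universal LIL constant $2(q+1)=2$. Your variance heuristic and the Gaussian evaluation $\kappa_2=1/\sqrt\pi$ are correct; the case-(C) variance constant is not computed from scratch in the paper but quoted from Corollary~5.3 of \cite{Alsmeyer+Iksanov+Kabluchko:2025}.
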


\begin{rem}
In the case (D) with $\alpha=0$, a simplification is possible: $\int_0^\infty \mmp\{Z_0> y\}\mmp\{Z_0 \leq y\}{\rm d}y=1/2$. In particular, the right-hand side in \eqref{eq:lil heavy} is then equal to $1$.
\end{rem}

\begin{example}
Assume that $\xi$ has the gamma distribution with parameters $2/\rho$ and $1$, see formula~\eqref{eq:gamma}. Then $\mu=\sigma^2=2/\rho$. A specialization of the case (A) of Theorem \ref{main1} together with formula ~\eqref{eq:principal} ensures that
\begin{equation*}
\limsup_{t\to \infty} \frac{\Theta_\rho(D_t) - V(t^\rho)}{t^{\rho/4}(\log t)^{1/2}} = \frac{\rho}{(4\pi)^{1/4}}\quad \text{{\rm a.s.}}
\end{equation*}
and
\begin{equation*}
\liminf_{t\to \infty} \frac{\Theta_\rho(D_t) - V(t^\rho)}{t^{\rho/4}(\log t)^{1/2}} =-\frac{\rho}{(4\pi)^{1/4}}\quad \text{{\rm a.s.}}
\end{equation*}
\end{example}

\section{Proof of Theorem \ref{main}}

We start with a couple of preparatory results.
\begin{lemma}\label{lem:cov}
Suppose (D). Then
\begin{align*}
\lim_{t\to\infty} \eee^{-t}({\rm Cov}\,(\hat N(h_\alpha(t+u)), \hat N(h_{\alpha}(t+v))))=\int_0^\infty \mmp\{Z_\alpha>\eee^{-(u\wedge v)}y\}\mmp\{Z_\alpha \le \eee^{-(u\vee v)}y\}\ {\rm d}y
\end{align*}
for $u,v\in\mr$.
\end{lemma}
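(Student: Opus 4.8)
The plan is to compute the covariance directly from the definition $\hat N(t)=\sum_{k\geq 1}\1_{\{\hat S_k\leq t\}}$, exploiting the independence of the $\hat S_k$. Since for independent indicators $\mathrm{Cov}(\1_{\{\hat S_k\leq a\}},\1_{\{\hat S_j\leq b\}})=0$ for $j\neq k$, only the diagonal terms survive, and with $a=h_\alpha(t+u)$, $b=h_\alpha(t+v)$, say $a\leq b$ (i.e. $u\leq v$ after composing with the nondecreasing $h_\alpha$), we get
\begin{equation*}
\mathrm{Cov}(\hat N(a),\hat N(b))=\sum_{k\geq 1}\Big(\mmp\{\hat S_k\leq a\}-\mmp\{\hat S_k\leq a\}\mmp\{\hat S_k\leq b\}\Big)=\sum_{k\geq 1}\mmp\{\hat S_k\leq a\}\mmp\{\hat S_k> b\}.
\end{equation*}
Recalling $\hat S_k\overset{d}{=}S_k$ and $\tau(t)=\inf\{k\geq 1: S_k>t\}$, one rewrites $\mmp\{S_k\leq a\}=\mmp\{\tau(a)>k\}$ and $\mmp\{S_k>b\}=\mmp\{\tau(b)\leq k\}$, so that the $k$-sum becomes $\sum_{k\geq 1}\mmp\{\tau(a)>k\}\mmp\{\tau(b)\leq k\}$; alternatively, keep it as a Riemann-type sum over $k$ of the product of two survival/distribution functions. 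Either way, after multiplying by $\eee^{-t}$ the task is to identify the limit of a sum of $k$ over $\mmp\{\tau(a)>k\}\,\mmp\{\tau(b)\leq k\}$.

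Next I would convert the sum into an integral and rescale. By \eqref{eq:basic conv2} of case (D), $\mmp\{\xi>a\}\tau(a)\overset{d}{\longrightarrow}Z_\alpha$, and the normalizing constant is $\mmp\{\xi>a\}=1/G(a)$. Since $G(h_\alpha(t+u))\sim \eee^{t+u}$, we have $\mmp\{\xi>h_\alpha(t+u)\}\sim \eee^{-(t+u)}$, and similarly $\mmp\{\xi>h_\alpha(t+v)\}\sim \eee^{-(t+v)}$. Writing $k=\eee^{t}y$ (so $y$ ranges over a continuum as $t\to\infty$) and using $\sum_k(\cdots)\approx \eee^{t}\int_0^\infty(\cdots)\,{\rm d}y$, the factor $\mmp\{\tau(a)>k\}=\mmp\{\mmp\{\xi>a\}\tau(a)>\mmp\{\xi>a\}k\}$ converges, for $\mmp\{\xi>a\}k\to \eee^{-u}y$, to $\mmp\{Z_\alpha>\eee^{-u}y\}$, and $\mmp\{\tau(b)\leq k\}\to \mmp\{Z_\alpha\leq \eee^{-v}y\}$. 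This heuristically yields the claimed limit $\int_0^\infty \mmp\{Z_\alpha>\eee^{-u}y\}\mmp\{Z_\alpha\leq\eee^{-v}y\}\,{\rm d}y$ with $u=u\wedge v$, $v=u\vee v$; the symmetric case $u>v$ follows by symmetry of the covariance.

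The main obstacle is making the passage from the discrete sum to the integral rigorous and justifying the interchange of limit and summation/integration. I would control this via a dominated-convergence or uniform-integrability argument: bound the tail $\sum_{k> M\eee^{t}}\mmp\{\tau(a)>k\}$ uniformly in $t$ using that $\me[\mmp\{\xi>a\}\tau(a)]$ (or a suitable moment) is bounded — this is where the regular variation of index $-\alpha$ with $\alpha\in[0,1)$ matters, guaranteeing $Z_\alpha$ is integrable and the pre-limit moments converge (Proposition A.1 in \cite{Gnedin etal:2009} can be invoked for moment convergence, or one uses the explicit Mittag-Leffler moments). On the truncated range $k\leq M\eee^t$ one upgrades the distributional convergence in \eqref{eq:basic conv2} to locally uniform convergence of distribution functions (the limit $Z_\alpha$ has a continuous distribution function, so Pólya's theorem / Dini-type uniformity applies), which legitimizes replacing $\mmp\{\tau(a)>k\}$ by $\mmp\{Z_\alpha>\mmp\{\xi>a\}k\}$ up to $o(1)$ uniformly, and then Riemann-sum convergence finishes the argument. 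A minor technical point, the choice of $h_\alpha$ and the asymptotics $\mmp\{\xi>h_\alpha(t+u)\}\sim\eee^{-(t+u)}$, is already handled by the discussion preceding Theorem \ref{main}.
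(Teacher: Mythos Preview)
Your proposal is correct and follows essentially the same route as the paper: compute the covariance as $\sum_{k\geq 1}\mmp\{S_k\le h_\alpha(t+u)\}\mmp\{S_k>h_\alpha(t+v)\}$, rewrite via $\{S_k\le z\}=\{\tau(z)>k\}$, substitute $k=\lfloor \eee^t y\rfloor$, and invoke \eqref{eq:basic conv2}. The only difference is in justifying the passage to the limit under the integral: the paper simply notes that continuity of the law of $Z_\alpha$ upgrades the convergence to uniform in $y\in[0,\infty]$ and concludes directly, whereas you spell out a truncation plus moment-bound argument---your version is more explicit, but the underlying idea is the same.
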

\begin{proof}
For convenience, define $S_0$ to be $0$. Recalling that $h_\alpha$ is a nondecreasing function, write, for $u<v$,
\begin{multline*}
{\rm Cov}\,(\hat N(h_\alpha(t+u)), \hat N(h_{\alpha}(t+v)))\\= \me \Big[\sum_{k\ge 1}(\1_{\{\hat S_{k}\le h_\alpha(t+u)\}}-\mmp\{\hat S_{k}\le h_\alpha(t+u)\})\sum_{j\ge 1}(\1_{\{\hat S_{j}\le h_\alpha(t+v)\}}-\mmp\{\hat S_{j}\le h_\alpha(t+v)\})\Big]\\=\int_0^\infty\mmp\{S_{\lfloor x\rfloor}\le h_\alpha (t+u)\}\mmp\{S_{\lfloor x\rfloor}>h_\alpha(t+v)\}\ {\rm d}x.
\end{multline*}
Changing the variable $x=\eee^t y$ and using the equality $\{S_k\leq z\}=\{\tau(z)>k\}$ which holds for $k\in\mn$ and $z\geq 0$ we infer
\begin{multline*}
{\rm Cov}(\hat N(h_\alpha(t+u)), \hat N(h_\alpha(t+v)))=\eee^t \int_0^\infty \mmp\{S_{\lfloor \eee^t y\rfloor}\le h_{\alpha}(t+u)\}\mmp\{S_{\lfloor \eee^t y\rfloor}>h_{\alpha}(t+v)\}{\rm d}y\\ =\eee^t \int_0^\infty \mmp\{\tau(h_{\alpha}(t+u))>\lfloor \eee^t y\rfloor\}\mmp\{\tau(h_{\alpha}(t+v))\le \lfloor \eee^t y\rfloor\}{\rm d}y. 
\end{multline*}
Formula \eqref{eq:basic conv2} entails that, for any fixed $y\geq 0$ and $w\in\mr$,
\begin{equation*} 
\lim_{t\to\infty}\mmp\{\tau(h_\alpha(t+w))>\lfloor \eee^t y\rfloor\}=\mmp\{Z_\alpha>\eee^{-w}y\}.
\end{equation*}
Since the function $y\mapsto \mmp\{Z_\alpha\leq y\}$ is continuous on $[0,\infty)$, the convergence is actually uniform in $y\in [0,\infty]$ (the compactified nonnegative halfline). Therefore,
$$\lim_{t\to\infty}\int_0^\infty \mmp\{\tau(h_{\alpha}(t+u))>\lfloor \eee^t y\rfloor\}\mmp\{\tau(h_{\alpha}(t+v))\le \lfloor \eee^t y\rfloor\}{\rm d}y=\int_0^\infty\mmp\{Z_\alpha>\eee^{-u}y\}\mmp\{Z_\alpha \leq \eee^{-v}y\}{\rm d}y.$$

The proof of Lemma \ref{lem:cov} is complete.
\end{proof}

For $t \geq 0$, put
\begin{equation}\label{def_v}
W(t):= \int_0^t V(t-y)\eee^{-y}{\rm d} y =\me [V(t - \eta)\1_{\{\eta \leq t\}}],
\end{equation}
where $\eta$ is a random variable with the exponential distribution of unit mean.
\begin{lemma}\label{uv_mon}
Pick $\lambda>0$ such that $\mmp\{\xi\leq \lambda\}>0$. Then $W$ is strictly increasing and continuous on $(\lambda, \infty)$.
\end{lemma}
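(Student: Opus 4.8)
The plan is to establish the two assertions separately, first continuity and then strict monotonicity, both on the interval $(\lambda,\infty)$. For continuity, I would note that $V$ is a nondecreasing function (being a renewal function minus $1$), hence locally bounded, and observe that the representation $W(t)=\int_0^t V(t-y)\eee^{-y}\,{\rm d}y=\int_0^t V(y)\eee^{-(t-y)}\,{\rm d}y=\eee^{-t}\int_0^t V(y)\eee^{y}\,{\rm d}y$ exhibits $W$ as $\eee^{-t}$ times an absolutely continuous function of $t$; more directly, for $s<t$ one has $W(t)-W(s)=\eee^{-t}\int_s^t V(y)\eee^{y}\,{\rm d}y+(\eee^{-t}-\eee^{-s})\int_0^s V(y)\eee^{y}\,{\rm d}y$, and both terms tend to $0$ as $s\to t$ because $V$ is bounded on compact sets. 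This gives continuity on all of $[0,\infty)$, in particular on $(\lambda,\infty)$.

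For strict monotonicity, the idea is to exploit the fact that $V$ is not merely nondecreasing but actually strictly increasing ``eventually'': since $\mmp\{\xi\le\lambda\}>0$, for every $n$ the event $\{S_n\le n\lambda\}$ has positive probability, so $V(t)=\sum_{k\ge 1}\mmp\{S_k\le t\}$ is unbounded and, more to the point, $V(t)-V(s)>0$ whenever $t-s$ is large enough relative to where $s,t$ sit — but a cleaner route is: $V(t)\ge \mmp\{S_1\le t\}=\mmp\{\xi\le t\}$, which is $>0$ for $t>\lambda$ (wait, only $\ge\mmp\{\xi\le\lambda\}>0$ for $t\ge\lambda$), and more usefully the function $t\mapsto V(t)$ is strictly increasing on $(\lambda,\infty)$ because for $\lambda<s<t$ the renewal measure assigns positive mass to $(s,t]$: indeed $V(t)-V(s)=\sum_{k\ge1}\mmp\{s<S_k\le t\}\ge\mmp\{s<S_1\le t\}=\mmp\{s<\xi\le t\}$ need not be positive, so instead I would argue via $\mmp\{s<S_k\le t\}$ for a suitable $k$, using that the support of the law of some $S_k$ meets $(s,t]$ — this follows from $\mmp\{\xi\le\lambda\}>0$ together with nondegeneracy, which forces the closed semigroup generated by the support of $\xi$ to be unbounded with gaps shrinking, hence to intersect every interval $(s,t]$ with $s>\lambda$.

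Given strict monotonicity of $V$ on $(\lambda,\infty)$, I would conclude for $W$ as follows. For $\lambda<s<t$, write $W(t)-W(s)=\int_0^s\big(V(t-y)-V(s-y)\big)\eee^{-y}\,{\rm d}y+\int_s^t V(t-y)\eee^{-y}\,{\rm d}y\ge \int_0^{s-\lambda}\big(V(t-y)-V(s-y)\big)\eee^{-y}\,{\rm d}y$, where in the retained range $y<s-\lambda$ we have $\lambda<s-y<t-y$, so the integrand is strictly positive (using monotonicity of $V$ and that it is strictly increasing there, the difference $V(t-y)-V(s-y)$ is bounded below by the positive renewal mass of $(s-y,t-y]$, which is $\ge$ the renewal mass of $(s-\lambda, s-\lambda + (t-s)]$-type intervals by the monotonicity argument above). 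Hence $W(t)-W(s)>0$.

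The main obstacle is the strict-monotonicity claim for $V$ on $(\lambda,\infty)$: one must show the renewal measure $\sum_{k\ge1}\mmp\{S_k\in\cdot\}$ charges every subinterval of $(\lambda,\infty)$. I expect this to be handled by observing that $\mmp\{\xi\le\lambda\}>0$ implies, for each $n$, $\mmp\{S_n\le n\lambda\}\ge(\mmp\{\xi\le\lambda\})^n>0$, so $\inf\,\mathrm{supp}(S_n)\le n\lambda$; combined with nondegeneracy (which yields points in $\mathrm{supp}(\xi)$ above $0$, and hence, by adding up, that consecutive reachable levels get arbitrarily dense far out, or more simply that $V(t)\to\infty$ and the jumps of $V$ — which occur on a set of measure zero if $\xi$ has a density, but in general — can be controlled), one gets that $(s,t]$ for any $\lambda<s<t$ contains a point of $\bigcup_k\mathrm{supp}(S_k)$, whence $V(t)>V(s)$. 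For the application in this paper $\xi$ has a density, so $V$ is in fact continuous and strictly increasing on $(\lambda,\infty)$, which makes this step immediate; I would present the general argument but remark that in the case of interest it simplifies.
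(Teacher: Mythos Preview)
Your continuity argument is fine and matches the paper's (which simply declares continuity ``obvious'' from the representation $W(t)=\eee^{-t}\int_0^t V(y)\eee^{y}\,{\rm d}y$).

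The strict monotonicity argument, however, has a genuine gap. Your entire plan rests on proving that $V$ itself is strictly increasing on $(\lambda,\infty)$, i.e.\ that the renewal measure charges every subinterval of $(\lambda,\infty)$. This is false in general. Take $\xi$ supported on $\{1,2\}$ with $\mmp\{\xi=1\}>0$; then $\lambda=1$ is admissible, every $S_k$ is integer-valued, and $V$ is constant on each interval $[n,n+1)$. Your claimed conclusion that ``$(s,t]$ for any $\lambda<s<t$ contains a point of $\bigcup_k\mathrm{supp}(S_k)$'' fails for, say, $(s,t]=(1.3,1.7]$. The density remark at the end does not rescue the lemma as stated, since the paper applies it under hypotheses (A)--(D) that do not assume absolute continuity of $\xi$.

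The paper avoids this obstacle entirely by a short trick that uses only $V(t)>0$ for $t\ge\lambda$ and the monotonicity (not strict) of $V$. From $\int_0^t V(y)\eee^{y}\,{\rm d}y\le V(t)(\eee^t-1)<V(t)\eee^t$ one gets the strict inequality $W(t)<V(t)$ for $t\ge\lambda$. Then, decomposing
\[
W(t+h)=\eee^{-h}W(t)+\eee^{-(t+h)}\int_t^{t+h}V(y)\eee^{y}\,{\rm d}y
\]
and bounding the last integral below by $V(t)(\eee^{t+h}-\eee^t)$ yields
\[
W(t+h)-W(t)\ \ge\ (1-\eee^{-h})\bigl(V(t)-W(t)\bigr)\ >\ 0.
\]
So the smoothing by the exponential kernel is what makes $W$ strictly increasing even when $V$ has flat stretches; this is the idea you are missing.
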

\begin{proof}
Continuity of $W$ on $[0,\infty)$ is obvious.

By assumption, $V(t)>0$ for $t \geq \lambda$. For such $t$, $$\int_0^t V(y)\eee^y {\rm d}y \leq V(t)(\eee^t-1)<V(t)\eee^t$$ and thereupon $W(t)=\eee^{-t}\int_0^t V(y)\eee^y{\rm d}y < V(t)$. Fix any $h>0$ and write
\begin{multline*}
W(t+h)= \eee^{-(t+h)}\int_0^{t+h} V(y)\eee^y {\rm d}y = \eee^{-h} \eee^{-t}\int_0^t V(y)\eee^y {\rm d}y+\eee^{-(t+h)}\int_t^{t+h} V(y)\eee^y {\rm d}y \\ = \eee^{-h}W(t) +\eee^{-(t+h)}\int_t^{t+h}V(y)\eee^y {\rm d}y.
\end{multline*}
Finally,
\begin{multline*}
W(t+h)-W(t)=(\eee^{-h}-1)W(t)+\eee^{-(t+h)}\int_t^{t+h}V(y)\eee^y {\rm d}y \\ \geq (\eee^{-h}-1)W(t)+\eee^{-(t+h)}(\eee^{t+h}-\eee^t)V(t)= (1-\eee^{-h})(V(t)-W(t)) > 0.
\end{multline*}
Thus, $W$ is indeed strictly increasing on $(\lambda, \infty)$.

\end{proof}

\begin{proof}[Proof of Theorem \ref{main}]
Our proof follows the standard pattern. We prove weak convergence of finite-dimensional distributions and then check tightness.

For large $t$, put
$$ Z(t,u)\,:\,=\eee^{-t/2}\big(\hat N(h_{\alpha}(t+u))-V(h_{\alpha}(t+u))\big), \quad u\in\mr.$$
By the Cram\'{e}r-Wold device (see, for instance, pp.~48-49 in \cite{Billingsley:1968}), the weak convergence of its finite-dimensional distributions is equivalent to
\begin{equation}\label{eq:Cramer-Wold device}
\sum_{i=1}^{k}\lambda_i Z(t,u_{i})~{\overset{{\rm d}}\longrightarrow}~ \sum_{i=1}^{k} \lambda_{i} X_{\alpha}(u_{i}),\quad  t\to\infty,
\end{equation}
where ${\overset{{\rm d}}\longrightarrow}$ denotes weak convergence of one-dimensional distribution, for all $k\in\mn$, all $\lambda_{1},\ldots, \lambda_{k}\in\mr$ and all $-\infty<u_{1}<\ldots<u_{k}<\infty$. In view of $$\sum_{i=1}^{k}\lambda_i Z(t,u_{i})= \eee^{-t/2} \sum_{n\ge 1}\sum_{i=1}^k \lambda_{i}(\1_{\{\hat S_n\le h_\alpha(t+u_{i})\}}-\mmp\{\hat S_{n}\le h_{\alpha}(t+u_{i})\}),$$ the left-hand side in \eqref{eq:Cramer-Wold device} is an infinite sum of independent centered random variables with finite second moments. Hence, by the Lindeberg-Feller theorem (see, for instance, Theorem 3.4.5 on p.~129 in \cite{Durrett:2010}), \eqref{eq:Cramer-Wold device} follows if we can show that
\begin{multline}\label{eq:mgale CLT1}
\lim_{t\to\infty}\me\Big[\Big(\sum_{i=1}^{k} \lambda_{i} Z(t,u_{i})\Big)^2\Big]\ =\ \me\Big[\Big(\sum_{i=1}^{k} \lambda_i X_{\alpha}(u_{i})\Big)^2\Big]=\ \sum_{i=1}^k \lambda_{i}^{2}\, {\rm Var}\,X_{\alpha}(u_{i})\,\\+\,2\sum_{1\le i<j\le k}\lambda_{i} \lambda_{j}\,{\rm Cov}\,(X_{\alpha}(u_{i}), X_{\alpha}(u_{j}))
\end{multline}
and
\begin{multline}\label{eq:mgale CLT2}
\lim_{t\to\infty}\eee^{-t}\sum_{n\ge 1}\me \Big[\Big(\sum_{i=1}^k \lambda_i (\1_{\{\hat S_{n}\le h_{\alpha}(t+u_{i})\}}-\mmp\{\hat S_{n}\le h_{\alpha}(t+u_{i})\})\Big)^2\\\times \1_{\{|\sum_{i=1}^{k}\lambda_i(\1_{\{\hat S_{n}\le h_{\alpha}(t+u_{i})\}}-\,\mmp\{\hat S_{n}\le h_{\alpha}(t+u_{i})\})|>\varepsilon \eee^{t/2}\}}\Big]\ =\ 0
\end{multline}
for all $\varepsilon>0$. Relation \eqref{eq:mgale CLT1} follows from Lemma \ref{lem:cov}. In view of $$\sum_{i=1}^k \lambda _i|\1_{\{\hat S_{n}\le h_{\alpha}(t+u)\}}-\mmp\{\hat S_{n}\le h_{\alpha}(t+u)\}|\le k\max_{1\leq i\leq k}\,|\lambda_i|\quad\text{a.s.},$$ the indicator in \eqref{eq:mgale CLT2} is equal to $0$ for sufficiently large $t$. Hence, \eqref{eq:mgale CLT2} also holds.

The proof of \eqref{eq:Cramer-Wold device} is complete.

Our next task is to check that the family of distributions of the processes $((Z(t,u))_{u\in\mr})_{t>0}$ is tight on the Skorokhod space $D[-A,\,A]$ for any fixed $A>0$. Our proof below  is a simplified version of Theorem 2.1 in \cite{Alsmeyer+Iksanov+Kabluchko:2025} dealing with the situation in which $\me[\xi]<\infty$. In particular, we demonstrate that Lipschitz continuity of $V$ required in that theorem is not actually needed.

By Theorem 15.6 in \cite{Billingsley:1968} and the remark following it, it is enough to show that
\begin{equation}\label{eq:tight}
\me\big[(Z(t,v)-Z(t,u))^2 (Z(t,w)-Z(t,v))^2\big]\leq C (\eee^w-\eee^u)^2
\end{equation}
for all $u<v<w$ in the interval $[-A,\,A]$ and large $t>0$.

For $n\in\mn$, introduce the following Bernoulli random variables
\begin{equation*} 
L_n:= \1_{\{h_\alpha(t+u) <\hat S_n \leq h_\alpha(t+v)\}}, \qquad   M_n:= \1_{\{h_\alpha(t+v)<\hat S_n \leq h_\alpha(t+w)\}}
\end{equation*}
and their centered versions
$$
\widetilde L_n:= L_n - \me[L_n], \qquad \widetilde M_n := M_n - \me[M_n].
$$
Put
$$
q_n:= \mmp\{L_n = 1\} = \me[L_n], \qquad  z_n:= \mmp\{M_n = 1\} = \me[M_n].
$$

It is known in the case (D), see, for instance, the equivalence (8.6.1) $\Leftrightarrow$ (8.6.3) on p.~361 in \cite{Bingham+Goldie+Teugels:1989}, that
\begin{equation}\label{eq:first moment}
\me [\hat N(t)]=V(t)~\sim~\frac{1}{\Gamma(1-\alpha)\Gamma(1+\alpha)}(\mmp\{\xi>t\})^{-1},\quad t\to\infty.
\end{equation}
This together with the definition of $h_\alpha$ entails $\lim_{t\to\infty}\eee^{-t}(V(h_\alpha(t+x))-V(h_\alpha(t+y)))=\eee^x-\eee^y$ uniformly in $x,y\in [-A,\,A]$. Hence,
\begin{multline}\label{eq:q_k_est}
\sum_{n\geq 1}q_n =V(h_\alpha(t+v))-V(h_\alpha(t+u))\leq C_1 (\eee^v-\eee^u)\eee^t \quad\text{and}\\ \sum_{n\geq 1}z_n=V(h_\alpha(t+w))-V(h_\alpha(t+v))\leq C_1 (\eee^w-\eee^v)\eee^t 
\end{multline}
for all $u<v<w$ in the interval $[-A,\,A]$, large $t>0$ and a constant $C_1>0$. Observe that
$$\eee^{t/2} (Z(t,v)-Z(t,u))= \sum_{n\geq 1} \widetilde L_n\quad \text{and}\quad \eee^{t/2} (Z(t,w)-Z(t,v))= \sum_{n\geq 1} \widetilde M_n,$$
which implies that \eqref{eq:tight} is equivalent to
$$\me \Big[\Big(\sum_{n_1\geq 1}\widetilde L_{n_1}\Big)^2 \Big(\sum_{n_2\geq 1} \widetilde M_{n_2}\Big)^2\Big]=\sum_{n_1,n_2,n_3,n_4\geq 1} \me \left[\widetilde L_{n_1}\widetilde L_{n_3}\widetilde M_{n_2}\widetilde M_{n_4} \right]  \leq C (\eee^w-\eee^u)^2 \eee^{2t}$$
for all $u<v<w$ in the interval $[-A,\,A]$ and large $t>0$. If $n_1$ is not equal to any of the numbers $n_2$, $n_3$ or $n_4$, then $\widetilde L_{n_1}$ is independent of the vector $(\widetilde L_{n_3}, \widetilde M_{n_2}, \widetilde M_{n_4})$, whence $\me [\widetilde L_{n_1}\widetilde L_{n_3}\widetilde M_{n_2}\widetilde M_{n_4}] = 0$. More generally, if one of the numbers $n_1$, $n_2$, $n_3$ or $n_4$ is not equal to any of the remaining ones, then the expectation vanishes. In the following, we shall consider collections $(n_1,n_2,n_3,n_4)$ in which every number is equal to some other number.

\vspace*{2mm}
\noindent {\sc Case $n_1\neq n_3$.} Then, either $n_2 =n_1$ and $n_4 =n_3$, or $n_2 =n_3$ and $n_4 =n_1$. To explain the idea we only treat the first situation. The corresponding contribution is
$$\sum_{n_1\neq n_3} \me \left[\widetilde L_{n_1}\widetilde L_{n_3}\widetilde M_{n_1}\widetilde M_{n_3}\right]=\sum_{n_1\neq n_3} \me \left[\widetilde L_{n_1}\widetilde M_{n_1}\right] \me \left[ \widetilde L_{n_3}\widetilde M_{n_3}\right].$$ Since $L_{n_1}$ and $M_{n_1}$ cannot be equal to $1$ at the same time, $$\me \left[\widetilde L_{n_1}\widetilde M_{n_1}\right]=-\me[L_{n_1}]\me[M_{n_1}]=-q_{n_1}z_{n_1}.$$
Analogously, $\me \Big[\widetilde L_{n_3}\widetilde M_{n_3}\Big]=-q_{n_3}z_{n_3}$ and thereupon $$
\sum_{n_1\neq n_3} \me \left[\widetilde L_{n_1}\widetilde L_{n_3}\widetilde M_{n_1}\widetilde M_{n_3}\right]=\sum_{n_1\neq n_3} q_{n_1}z_{n_1}q_{n_3}z_{n_3}\leq \sum_{n_1\geq 1}q_{n_1}\sum_{n_2\geq 1}z_{n_2}.
$$
Invoking \eqref{eq:q_k_est} we obtain
\begin{equation}\label{eq:est_sum_q_r}
\sum_{n_1\geq 1} q_{n_1}\sum_{n_2\geq 1}z_{n_2}\leq C_1^2 (\eee^w-\eee^u)^2 \eee^{2t}
\end{equation}
for all $u<v<w$ in the interval $[-A,\,A]$ and large $t>0$.

\noindent {\sc Case $n_1 = n_3$}. We also assume that $n_2= n_4$, for otherwise the expectation $\me[\widetilde L_{n_1}\widetilde L_{n_3}\widetilde M_{n_2}\widetilde M_{n_4}]$ vanishes. The corresponding contribution is given by
\begin{multline*}
\sum_{n_1, n_2\geq 1}\me \left[\widetilde L_{n_1} \widetilde L_{n_1} \widetilde M_{n_2}\widetilde M_{n_2}\right]=\sum_{n_1\neq n_2} \me \left[\widetilde L_{n_1}^2 \right] \me \left[\widetilde M_{n_2}^2\right]+\sum_{n\geq 1}\me \left[\widetilde L_n^2\widetilde M_n^2\right]\\ \leq  \me\Big[\sum_{n_1\neq n_2} q_{n_1}z_{n_2}+ 2\sum_{n\geq 1} q_n z_n\Big] \leq 2 \sum_{n_1\geq 1 }q_{n_1}\sum_{n_2\geq 1}z_{n_2}\leq 2C_1^2 (\eee^w-\eee^u)^2 \eee^{2t}
\end{multline*}
for all $u<v<w$ in the interval $[-A,\,A]$ and large $t>0$. The first equality stems from the fact that $L_n$ and $M_n$ cannot be equal to $1$ simultaneously. The last inequality is justified by~\eqref{eq:est_sum_q_r}. The first inequality follows from $\me[\widetilde L_{n_1}^2]=q_{n_1}(1-q_{n_1})\leq q_{n_1}$, $\me[\widetilde M_{n_2}^2]=z_{n_2}(1-z_{n_2})\leq z_{n_2}$ and
\begin{multline*}
\me\left[\widetilde L_n^2  \widetilde M_n^2\right]=q_n (1-q_n)^2(-z_n)^2 +z_n(1-z_n)^2 (-q_n)^2 + (1-q_n-z_n)(-q_n)^2(-z_n)^2\\=q_nz_n(q_n+z_n-3q_nz_n)\leq 2 q_nz_n.
\end{multline*}
This finishes  the proof of tightness.

The proof of Theorem \ref{main} is complete.
\end{proof}

\section{Proof of Theorem \ref{main1}}

We need an auxiliary result. 

\begin{lemma}\label{var_12}
The variance of $\hat{N}(t)$ exhibits the following asymptotic behavior, as $t \to \infty$,
\begin{equation*} 
{\rm Var}[\hat{N}(t)]~ \sim~ \left(\frac{\sigma^2 t}{\mu^3 \pi} \right)^{1/2} \quad \text{under (A)},
\end{equation*}
\begin{equation*} 
{\rm Var}[\hat{N}(t)]~ \sim~ \left(\frac{1}{\mu^3 \pi} \right)^{1/2} c_2(t) \quad  \text{under (B)},
\end{equation*}
\begin{equation*}
{\rm Var}[\hat{N}(t)]~ \sim~ \frac{\Gamma(1-1/\alpha)}{\mu^{1+1/\alpha}\pi}\Big(\frac{-2\Gamma(2-\alpha)\cos(\pi \alpha/2)}{\alpha-1}\Big)^{1/\alpha}c_\alpha(t) \quad  \text{under (C)},
\end{equation*}
and
\begin{equation*}
{\rm Var}[\hat{N}(t)]~ \sim~ \frac{\int_0^\infty \mmp\{Z_\alpha > y\}\mmp\{Z_\alpha\leq y\}{\rm d}y}{\mmp\{\xi>t\}} \quad  \text{under (D)}.
\end{equation*}
\end{lemma}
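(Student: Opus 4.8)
The starting point is the exact variance formula delivered by the mutual independence of $\hat S_1,\hat S_2,\dots$: since $\hat S_n$ has the distribution of $S_n$,
$$
{\rm Var}[\hat N(t)]=\sum_{n\geq1}{\rm Var}\big[\1_{\{\hat S_n\leq t\}}\big]=\sum_{n\geq1}\mmp\{S_n\leq t\}\,\mmp\{S_n>t\}.
$$
Using the equalities $\{S_n\leq t\}=\{\tau(t)>n\}$ and $\{S_n>t\}=\{\tau(t)\leq n\}$, letting $\tilde\tau(t)$ be an independent copy of $\tau(t)$, and applying Tonelli's theorem, I would rewrite the right-hand side as
$$
{\rm Var}[\hat N(t)]=\sum_{n\geq1}\mmp\{\tau(t)>n,\ \tilde\tau(t)\leq n\}=\me\big[\#\{n\in\mn:\ \tilde\tau(t)\leq n<\tau(t)\}\big]=\me\big[(\tau(t)-\tilde\tau(t))^+\big],
$$
and, by the symmetry of the pair $(\tau(t),\tilde\tau(t))$, this equals $\tfrac12\,\me|\tau(t)-\tilde\tau(t)|$ (a finite quantity, since $\me[\tau(t)]=V(t)+1<\infty$). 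Thus the lemma reduces to the asymptotics of the first absolute moment of the difference of two independent copies of the first-passage time.

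Next I would invoke \eqref{eq:basic conv} and \eqref{eq:basic conv2}. Under (A), (B), (C) the deterministic centering $\mu^{-1}t$ cancels in $\tau(t)-\tilde\tau(t)$, so $(\tau(t)-\tilde\tau(t))/(\mu^{-1-1/\alpha}c_\alpha(t))\overset{{\rm d}}{\longrightarrow}Z_\alpha-\tilde Z_\alpha$ with $\tilde Z_\alpha$ an independent copy of $Z_\alpha$; under (D), $\mmp\{\xi>t\}\,(\tau(t)-\tilde\tau(t))\overset{{\rm d}}{\longrightarrow}Z_\alpha-\tilde Z_\alpha$. Fatou's lemma (via a Skorokhod coupling) then already yields the ``$\geq$'' half of each asymptotic relation; the matching upper bound is where the real work lies and requires uniform integrability of the normalized differences. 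Since $\tau(t)$ and $\tilde\tau(t)$ are identically distributed, it suffices to show that $\{|\tau(t)-\mu^{-1}t|/c_\alpha(t)\}_t$ is uniformly integrable under (A)--(C) and that $\{\mmp\{\xi>t\}\,\tau(t)\}_t$ is uniformly integrable under (D). Under (A) this is classical: $\me[\xi^2]<\infty$ makes $(\tau(t)-\mu^{-1}t)/t^{1/2}$ bounded in $L^2$. Under (B), (C) it follows from standard moment and tail estimates for renewal first-passage times (a ``one big jump'' bound for the heavy left tail of $\tau(t)$ together with the light right tail), which produce a $t$-uniform integrable majorant of the normalized difference; see, e.g., \cite{Gnedin etal:2009, Iksanov:2016}. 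Under (D) I would run the Karamata--Tauberian machinery (see \cite{Bingham+Goldie+Teugels:1989}) on the factorial-moment identity
$$
\me\big[N(t)(N(t)-1)\cdots(N(t)-k+1)\big]=k!\sum_{m\geq k}\binom{m-1}{k-1}\mmp\{S_m\leq t\},
$$
whose Laplace transform in $t$ equals $\phi(s)^k(1-\phi(s))^{-k}$ with $\phi(s)=\me[\eee^{-s\xi}]$; since $1-\phi(s)$ is regularly varying of index $\alpha$ as $s\to0^+$, this gives $\me[(\mmp\{\xi>t\}N(t))^k]\to\me[(Z_\alpha)^k]<\infty$ for every $k\in\mn$, in particular boundedness of second moments and hence uniform integrability. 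This uniform integrability step is the principal obstacle.

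Granting it, convergence in distribution combined with uniform integrability yields
$$
{\rm Var}[\hat N(t)]=\tfrac12\,\me|\tau(t)-\tilde\tau(t)|~\sim~\tfrac12\,b(t)\,\me|Z_\alpha-\tilde Z_\alpha|,\qquad t\to\infty,
$$
where $b(t)=\mu^{-1-1/\alpha}c_\alpha(t)$ under (A)--(C) and $b(t)=(\mmp\{\xi>t\})^{-1}$ under (D), and it remains only to compute $\me|Z_\alpha-\tilde Z_\alpha|$. Under (A) and (B), $Z_\alpha=Z_2$ is standard normal, so $Z_2-\tilde Z_2$ is $N(0,2)$ and $\me|Z_2-\tilde Z_2|=2/\sqrt{\pi}$; inserting $\alpha=2$ and $c_2(t)=\sigma t^{1/2}$ (respectively the function $c_2$ of (B)) gives the first two claimed asymptotics. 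Under (C) the characteristic function of $Z_\alpha-\tilde Z_\alpha$ is $|\me[\eee^{{\rm i}vZ_\alpha}]|^2=\exp\{-C|v|^\alpha\}$ with $C=-2(\alpha-1)^{-1}\Gamma(2-\alpha)\cos(\pi\alpha/2)>0$, so $Z_\alpha-\tilde Z_\alpha\overset{{\rm d}}{=}C^{1/\alpha}W$ for a symmetric $\alpha$-stable $W$ normalized by $\me[\eee^{{\rm i}vW}]=\eee^{-|v|^\alpha}$; the classical identity $\me|W|=2\Gamma(1-1/\alpha)/\pi$ then reproduces the third asymptotic. Under (D), independence and Fubini's theorem give $\me|Z_\alpha-\tilde Z_\alpha|=2\,\me[(Z_\alpha-\tilde Z_\alpha)^+]=2\int_0^\infty\mmp\{Z_\alpha>y\}\mmp\{Z_\alpha\leq y\}\,{\rm d}y$, so $\tfrac12\,b(t)\,\me|Z_\alpha-\tilde Z_\alpha|$ is exactly the expression in the statement. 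Everything after the uniform integrability step is routine computation with normal, stable, and Mittag--Leffler laws.
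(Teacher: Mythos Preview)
Your argument is correct, and the route is genuinely different from the paper's. The paper does not prove cases (A), (B), (C) at all but simply cites Corollary~5.3 of \cite{Alsmeyer+Iksanov+Kabluchko:2025}; for case (D) it specializes Lemma~\ref{lem:cov} at $u=v=0$, writing ${\rm Var}[\hat N(h_\alpha(t))]=\eee^t\int_0^\infty \mmp\{\tau(h_\alpha(t))>\lfloor \eee^t y\rfloor\}\mmp\{\tau(h_\alpha(t))\leq \lfloor \eee^t y\rfloor\}\,{\rm d}y$ and passing to the limit in the integrand via the P\'olya-type uniform convergence coming from continuity of the Mittag--Leffler cdf. Your reformulation ${\rm Var}[\hat N(t)]=\tfrac12\,\me|\tau(t)-\tilde\tau(t)|$ is an elegant unifying device: it treats all four regimes on the same footing, reduces the problem to a first-moment continuity question for the (centered and scaled) first-passage time, and makes the appearance of the constants $\me|Z_\alpha-\tilde Z_\alpha|$ transparent. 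The price is that you must supply uniform integrability, which for (B) and (C) you only sketch by citation --- comparable in rigor to the paper's own citation --- and for (D) you obtain via convergence of all moments of $\mmp\{\xi>t\}\,N(t)$ through the factorial-moment/Tauberian argument (note that the Laplace transform of the $k$th factorial moment carries an extra factor $k!/s$ beyond the $\phi(s)^k(1-\phi(s))^{-k}$ you wrote, but this does not affect the regular-variation conclusion). The paper's route for (D) is shorter because it never isolates uniform integrability as a separate step, instead handling the tail of the $y$-integral implicitly through the convergence of $\eee^{-t}\me[\tau(h_\alpha(t))]$; your route is more transparent about where the analytic difficulty sits and yields the (A)--(C) constants by a direct stable-law computation rather than by quotation.
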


The first three asymptotic relations can be found in Corollary 5.3 of \cite{Alsmeyer+Iksanov+Kabluchko:2025}. The last asymptotic relation (in the case (D)) follows from Lemma \ref{lem:cov} upon setting $u=v=0$ and replacing $h_\alpha(t)$ by $t$.

We prove Theorem \ref{main1} by an application of Theorem 1.6 in \cite{Buraczewski+Iksanov+Kotelnikova:2025}. To this end, we need some preparations.

Let $(C_1(t))_{t\geq 0}$, $(C_2(t))_{t\geq 0},\ldots$ be independent families of events defined on a common probability space. Assume that, for each $t\geq 0$, $\sum_{k\geq 1}\mmp (C_k(t))<\infty$ and put $X(t):=\sum_{k\geq 1}\1_{C_k(t)}$. By the Borel-Cantelli lemma, $X(t)<\infty$ a.s. Observe that ${\rm Var}\,[X(t)]=\sum_{k\geq 1}\mmp(C_k(t))(1-\mmp(C_k(t)))\leq \sum_{k\geq 1}\mmp(C_k(t))=\me [X(t)]<\infty$. With a view towards formulating Theorem 1.6 in \cite{Buraczewski+Iksanov+Kotelnikova:2025} we introduce a number of assumptions.

\noindent (A1) $\lim_{t\to\infty}{\rm Var}[X(t)]=\infty$.

\noindent (A2) There exists a nondecreasing function $a_0$ such that ${\rm Var}[X(t)]\sim a_0(t)$ as $t\to\infty$.

\noindent (A3) There exists $\nu^\ast\geq 1$ such that $\me [X(t)]=O(({\rm Var}[X(t)])^{\nu^\ast})$ as $t\to\infty$. Put
\begin{equation}\label{eq:infim}
	\nu:=\inf\{\nu^\ast\ge 1: \me[X(t)]=O(({\rm Var}[X(t)])^{\nu^\ast})\}.
\end{equation}
If $\nu=1$, we assume that either $t\mapsto \me [X(t)]$ is eventually continuous or $${\lim\inf}_{t\to\infty}(\log \me[X(t-1)]/\log \me[X(t)])>0$$ and that
\begin{equation}\label{eq:infim2}
\me [X(t)]/{\rm Var}[X(t)]=O(f_q({\rm Var}[X(t)])),\quad t\to\infty,
\end{equation}
where $f_q(t)=(\log t)^{q}\mathcal{L}(\log t)$ for some $q\geq 0$ with $\mathcal{L}$ slowly varying at $\infty$. If $q>0$, we assume that $\me [X(t)]/{\rm Var}[X(t)]\neq O(f_s(a(t)))$ for $s\in (0,q)$.

\noindent (A4) For each $k\in\mn$ and $t>s>0$, $C_k(s)\subseteq C_k(t)$.

For each $\varrho\in (0,1)$, put
\begin{equation*}
\nu_\varrho:=\nu+\varrho\quad \text{if}~\nu>1\quad\text{and}\quad q_\varrho:=q+\varrho \quad \text{if}~\nu=1.
\end{equation*}
Assuming (A3), fix any $\kappa\in (0,1)$, any $\varrho\in (0,1)$ and put
\begin{equation*}
t_n=t_n(\kappa, \nu):=\inf\{t>0: \me[X(t)]>v_n(\kappa,\nu)\}
\end{equation*}
for $n\in\mn$, where $v_n(\kappa, 1)=v_n(\kappa, 1, q, \varrho)=\exp(n^{(1-\kappa)/(q_\varrho+1)})$ and $v_n(\kappa,\nu)=v_n(\kappa, \nu, \varrho)=n^{\nu_\varrho(1-\kappa)/(\nu_\varrho-1)}$ for $\nu>1$.

\noindent (A5) For each positive $\kappa$ sufficiently close to $0$ and for each $n$ large enough, there exists $A>1$ and a partition $t_n=t_{0,\,n}<t_{1,\,n}<\ldots<t_{j,\,n}=t_{n+1}$ with $j=j_n$ satisfying $$1\leq \me [X(t_{k,\,n})]-\me [X(t_{k-1,\,n})]\leq A,\quad 1\leq k\leq j$$ and, for all $\varepsilon>0$, $\big(j_n\exp(-\varepsilon ({\rm Var}[X(t_n)])^{1/2})\big)$ is a summable sequence.

Assuming (A1) and (A3), fix any $\gamma>0$ and put
\begin{equation*}
\tau_n=\tau_n(\gamma,\nu):=\inf\{t>0: {\rm Var}[X(t)]>w_n(\gamma,\nu)\}
\end{equation*}
for large $n\in\mn$ with $\nu$ as given in \eqref{eq:infim}. Here, with $q$ as given in \eqref{eq:infim2}, $w_n(\gamma, 1)=w_n(\gamma, 1, q)=\exp(n^{(1+\gamma)/(q+1)})$ if $\nu=1$ and $w_n(\gamma, \nu)=n^{(1+\gamma)/(\nu-1)}$ if $\nu>1$.

Here are the remaining assumptions.

\noindent (B1) The function $t\mapsto {\rm Var}[X(t)]$ is eventually continuous or $\lim_{t\to\infty}({\rm Var}[X(t-1)]/{\rm Var}[X(t)])=1$ if $\nu>1$ and $\lim_{t\to\infty}(\log {\rm Var}[X(t-1)]/\log {\rm Var}[X(t)])=1$ if $\nu=1$.

\noindent (B2) There exist $s_0>0$, $\varsigma_0>0$ and a family $(R_{\varsigma}(t))_{0<\varsigma < \varsigma_0, t>s_0}$ of sets of positive integers satisfying the following two conditions: for each $\gamma>0$ close to $0$ and all $0<\varsigma < \varsigma_0$ there exists $n_0=n_0(\varsigma, \gamma)\in\mn$ such that the sets $R_\varsigma(\tau_{n_0}(\gamma,\mu))$, $R_\varsigma(\tau_{n_0+1}(\gamma,\mu)),\ldots$ are disjoint; and
\begin{equation*} 
\lim_{t\to\infty}\frac{{\rm Var}\Big(\sum_{k\in	R_\varsigma(t)}\1_{A_k(t)}\Big)}{{\rm Var}[X(t)]}=1-\varsigma.
\end{equation*}

With these at hand we are ready to formulate Theorem 1.6 in \cite{Buraczewski+Iksanov+Kotelnikova:2025}.
\begin{prop}\label{prop:Buraetal}
Suppose (A1)-(A5), (B1) and (B2). Then, with $\nu\geq 1$ and $q\geq 0$ as defined in \eqref{eq:infim} and \eqref{eq:infim2}, respectively, $$\limsup_{t\to\infty}\frac{X(t)-\me[X(t)]}{(2(q+1){\rm Var}[X(t)]\log\log {\rm Var}[X(t)])^{1/2}}=1\quad\text{{\rm a.s.}}$$ if $\nu=1$ and $$\limsup_{t\to\infty}\frac{X(t)-\me[X(t)]}{(2(\nu-1){\rm Var}[X(t)]\log {\rm Var}[X(t)])^{1/2}}=1\quad\text{{\rm a.s.}}$$ if $\nu>1$.

The corresponding lower limits are equal to $-1$ a.s.
\end{prop}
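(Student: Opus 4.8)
The plan is to prove the upper bound $\limsup_{t\to\infty}(X(t)-\me[X(t)])/b(t)\le 1$ and the matching lower bound $\ge1$ separately, where $b(t)$ denotes $(2(q+1){\rm Var}[X(t)]\log\log {\rm Var}[X(t)])^{1/2}$ when $\nu=1$ and $(2(\nu-1){\rm Var}[X(t)]\log {\rm Var}[X(t)])^{1/2}$ when $\nu>1$; the assertions about the lower limits then follow by running the same two arguments for the reflected variables $\mmp(C_k(t))-\1_{C_k(t)}$, invoking (A4) in the reverse direction at the interpolation step. Write $\sigma^2(t):={\rm Var}[X(t)]$ and note that, for each fixed $t$, $X(t)-\me[X(t)]=\sum_{k\ge1}(\1_{C_k(t)}-\mmp(C_k(t)))$ is a sum of independent, centered, $[-1,1]$-valued random variables of total variance $\sigma^2(t)$, which tends to $\infty$ by (A1) and is asymptotic to a nondecreasing function by (A2).

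For the upper bound I would first record a sharp one-sided exponential inequality: Bennett's (or Prokhorov's) inequality applied to the above sum gives, for every $\varepsilon>0$, a $\delta>0$ with $\mmp\{X(t)-\me[X(t)]>\lambda\}\le\exp(-(1-\varepsilon)\lambda^2/(2\sigma^2(t)))$ whenever $0<\lambda\le\delta\sigma^2(t)$. Along the threshold grid $(\tau_n(\gamma,\nu))$ one has, by its definition together with (B1), that $\sigma^2(\tau_n)$ is of order $w_n(\gamma,\nu)$, so that $b(\tau_n)^2$ is of order $2(1+\gamma)\sigma^2(\tau_n)\log n$ in both regimes — the difference being only whether $n$ is of doubly-logarithmic or of logarithmic order in $\sigma^2(\tau_n)$; hence $\lambda=(1+\varepsilon)b(\tau_n)$ stays in the moderate-deviation window and the resulting tail is summable in $n$, so the Borel--Cantelli lemma gives the bound along $(\tau_n)$. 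To upgrade this to a $\limsup$ over all $t$, I would use the monotonicity (A4) together with the fine partition $t_n=t_{0,n}<\dots<t_{j_n,n}=t_{n+1}$ supplied by (A5): sandwiching $X(t)-\me[X(t)]$ between values at partition points whose means differ by at most $A$, it remains to control the maximum over a block of the incremental fluctuations, each of which is a sum of independent indicators of mean $O(1)$ and therefore exceeds $\varepsilon b(t_n)$ with probability at most $\exp(-c\varepsilon b(t_n))\le\exp(-c\varepsilon(\sigma^2(t_n))^{1/2})$ once $t_n$ is large; a union bound over the $j_n$ cells and over $n$ converges precisely because $(j_n\exp(-\varepsilon(\sigma^2(t_n))^{1/2}))_n$ does. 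The additional clauses of (A3) and (B1) are exactly what keep this interpolation lossless in the tight $\nu=1$ case, where the margin is only doubly logarithmic.

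For the lower bound I would exploit the family $(R_\varsigma(t))$ from (B2). Fix $\varsigma\in(0,\varsigma_0)$ and $\gamma>0$; for $n\ge n_0(\varsigma,\gamma)$ the index sets $R_\varsigma(\tau_n)$ are pairwise disjoint, so the blocks $Y_n:=\sum_{k\in R_\varsigma(\tau_n)}(\1_{C_k(\tau_n)}-\mmp(C_k(\tau_n)))$ are independent — the only step where independence of the families $(C_k(\cdot))_k$ is genuinely used. By (B2), ${\rm Var}[Y_n]=(1-\varsigma+o(1))\sigma^2(\tau_n)$, and each $Y_n$ is again a sum of independent $[-1,1]$-valued centered summands, so the converse half of the classical LIL argument — a Gaussian-type lower estimate for $\mmp\{Y_n>(1-\varepsilon)\sqrt{1-\varsigma}\,b(\tau_n)\}$, valid because that threshold lies in the moderate-deviation range of $Y_n$ — produces a non-summable sequence of probabilities, and the second Borel--Cantelli lemma yields $\limsup_n Y_n/b(\tau_n)\ge(1-\varepsilon)\sqrt{1-\varsigma}$ a.s. Writing $X(\tau_n)-\me[X(\tau_n)]=Y_n+R_n$ with $R_n$ a sum of independent $[-1,1]$-valued centered variables of variance $(\varsigma+o(1))\sigma^2(\tau_n)$, the upper-bound inequality from the previous step applied to $-R_n$ shows $R_n\ge-C\sqrt{\varsigma}\,b(\tau_n)$ eventually a.s.; adding the two estimates and letting $\varepsilon\downarrow0$, then $\varsigma\downarrow0$, then $\gamma\downarrow0$ gives $\limsup_{t\to\infty}(X(t)-\me[X(t)])/b(t)\ge1$ a.s.

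The main difficulty I anticipate is pinning the constant to exactly $1$, which requires two matching sharp estimates pulling in opposite directions. On the upper side the exponential tail must lose only a $1-o(1)$ factor in the exponent, which forces $\lambda/\sigma^2(t)\to0$ along the grid and hence a delicate calibration: the grid has to be coarse enough for the subsequence-to-continuum interpolation of (A4)--(A5) and (B1) to go through, yet fine enough that the summed tail probabilities still converge, and it is precisely the growth of $\me[X(t)]$ relative to $\sigma^2(t)$ — quantified by $\nu$ in (A3) — that governs this trade-off and decides whether the effective number of grid points per scale is of order $\log\sigma^2(t)$ (single logarithm, $\nu>1$) or $\log\log\sigma^2(t)$ (iterated logarithm, $\nu=1$). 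On the lower side one needs the blocks $Y_n$ to be exactly independent — whence the indispensability of disjointness of the $R_\varsigma(\tau_n)$ in (B2) — together with a one-sided central limit estimate sharp and uniform enough to survive the second Borel--Cantelli step; carrying both regimes through in parallel while keeping the constant matched is where the bulk of the work lies.
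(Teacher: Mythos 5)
A preliminary remark: the paper itself contains no proof of Proposition \ref{prop:Buraetal} --- it is a verbatim quotation of Theorem 1.6 in \cite{Buraczewski+Iksanov+Kotelnikova:2025}, used as a black box --- so your sketch can only be measured against the cited proof, whose skeleton (exponential tail bounds plus Borel--Cantelli along a deterministic grid, monotone interpolation via (A4)--(A5), independent blocks from (B2) plus the second Borel--Cantelli lemma for the lower bound, constants matched in the moderate-deviation regime) your proposal does reproduce in outline.

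The genuine gap is the interpolation step, which is exactly where the content of hypotheses (A3) and (A5) lies. First, you prove the subsequence bound along the variance-based grid $(\tau_n)$ (thresholds $w_n(\gamma,\nu)$) but then interpolate with the partition of $[t_n,t_{n+1}]$ supplied by (A5), which refers to the mean-based grid $(t_n)$ (thresholds $v_n(\kappa,\nu)$); these are different sequences, neither refines the other, and the two halves of your upper bound do not connect. Second, and more substantively, the within-block control you propose does not suffice: by monotonicity, for $t\in[t_{k-1,n},t_{k,n}]$ one has $X(t)-\me[X(t)]\le X(t_{k,n})-\me[X(t_{k,n})]+A$, so what must be controlled is the maximum of the value-level fluctuation over all $j_n$ partition points; bounding each single-cell increment (mean between $1$ and $A$) by $\varepsilon b(t_n)$ with probability $\exp(-c\varepsilon b(t_n))$ says nothing about interior points, since such per-cell deviations can accumulate over the up to $j_n$ cells of a block. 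The obvious repair --- a union bound over the $j_n$ points of the events $\{X(t_{k,n})-\me[X(t_{k,n})]>(1+\varepsilon)b(t_{k,n})\}$ with the Gaussian-regime tail --- fails in the critical case $\nu=1$: there the tail is only $\exp(-(1+\varepsilon)^2(q+1)(1+o(1))\log\log{\rm Var}[X(t_{k,n})])$, a power of $\log{\rm Var}$, while $j_n$ grows like a positive power of $\me[X(t_n)]$ (in case (D) of this paper, stretched-exponentially in $n$). The peculiar form of the summability hypothesis in (A5), namely that $\sum_n j_n\exp(-\varepsilon({\rm Var}[X(t_n)])^{1/2})<\infty$, signals that the cited proof controls the block maximum by a different device (a Poissonian-regime/maximal-type estimate for increments measured from $t_n$) rather than by either of the two bounds you invoke; supplying that device is where the bulk of the work lies, and the sketch does not close it. Two smaller points: for the lower bound you need a Kolmogorov-type exponential lower estimate (Bennett/Prokhorov give only upper bounds), which does apply here because the summands are bounded by $1$ and the threshold is $o({\rm Var}[X(\tau_n)])$; and your order of limits is off --- divergence of $\sum_n\mmp\{Y_n>(1-\varepsilon)\sqrt{1-\varsigma}\,b(\tau_n)\}$ forces $(1-\varepsilon)^2(1+\gamma)<1$, so $\varepsilon$ is tied to $\gamma$ and cannot be sent to $0$ first; one must let $\gamma\downarrow0$ before (or jointly with) $\varepsilon\downarrow0$.
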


\begin{proof}[Proof of Theorem \ref{main1}]
We intend to check that the conditions (A1)-(A5), (B1) and (B2) hold true.

\noindent {\sc Condition (A1)} holds\footnote{More generally, it was recently proved in Proposition 6 of \cite{Buraczewski+Iksanov+Marynych:2025} that $\lim_{t\to\infty}{\rm Var}[\hat N(t)]=\infty$ under the sole assumption that the distribution of $\xi$ is nondegenerate.} by Lemma \ref{var_12}.

\noindent {\sc Condition (A2)} is secured by Lemma \ref{var_12} in the cases (A) and (D). In the cases (B) and (C), $c_\alpha(t)\sim a_0(t)$, where $a_0$ is a nondecreasing function defined by $$a_0(t):=\inf\{s\geq 0: 1/\mmp\{\xi>s\}>t\},\quad t>1.$$ Another appeal to Lemma \ref{var_12} justifies condition (A2) in the cases (B) and (C).

\noindent {\sc Condition (A3)}. In the cases (A), (B) and (C), $\me [\hat N(t)]=V(t)\sim \mu^{-1}t$ as $t\to\infty$ by the elementary renewal theorem. This together with Lemma \ref{var_12} enables us  to conclude that $\nu=2$ in the cases (A) and (B) and that $\nu=\alpha$ in the case (C). Formula \eqref{eq:first moment} in combination with Lemma \ref{var_12} entails $\nu=1$,  $q=0$ and $\lim_{t\to\infty}(\log \me [\hat N(t-1)]/\log \me [\hat N(t)])=1$.

\noindent {\sc Condition (A4)} holds trivially.

\noindent {\sc Condition (A5)}. Pick any $m \in \mn$ satisfying $m > 2 + \frac{\me[\eee^{-\xi}]}{1-\me[\eee^{-\xi}]}$. We shall work with the function $W$ defined in \eqref{def_v}. For large enough $n$, put $j=j_n=\lfloor m^{-1}(V(t_{n+1})-V(t_n))\rfloor$ and construct the partition $t_n=t_{0,n}<t_{1,n}<\ldots<t_{j,n}=t_{n+1}$ via $W(t_{k,n})-W(t_{k-1,n})= m$ for $k\in\mn$, $k\leq j-1$. Then, necessarily $W(t_{j,n})-W(t_{j-1,n}) \in [m, 2m)$. The construction is possible, for, by Lemma \ref{uv_mon}, $W$ is continuous and strictly increasing for all arguments large enough.

Pick $\varepsilon>0$ satisfying $m \geq 2 + \frac{\me[\eee^{-\xi}]}{1-\me[\eee^{-\xi}]} + \varepsilon$. Write
\begin{equation*}
0\leq V(t)-W(t) = V(t)- \me  [V(t-\eta)\1_{\{\eta \leq t\}}] = \me[(V(t)-V(t-\eta))\1_{\{\eta \leq t\}}]+V(t)\eee^{-t}.
\end{equation*}
The function $V+1$ is subadditive, whence
\begin{equation*}
\me [(V(t)-V(t - \eta))\1_{\{\eta \leq t\}}] \leq \me [(1 + V(\eta)) \1_{\{\eta \leq t\}}] \leq 1 + \me[V(\eta)] = \frac{1}{1-\me[\eee^{-\xi}]}.
\end{equation*}
By the elementary renewal theorem, $V(t)\eee^{-t}\leq \varepsilon$ for large $t$. Thus, for large $t$,
\begin{equation*}
0\leq V(t)-W(t) \leq \frac{1}{1-\me[\eee^{-\xi}]} + \varepsilon.
\end{equation*}
This in combination with
\begin{equation*}
2m \geq W(t_{k,n})-W(t_{k-1,n})\geq W(t_{k,n})-V(t_{k-1,n})= V(t_{k,n}) - V(t_{k-1,n}) - (V(t_{k,n})-W(t_{k,n})),
\end{equation*}
which holds for large $n$ and $1\leq k\leq j_n$, proves $$V(t_{k,n}) - V(t_{k-1,n}) \leq 2m +\frac{1}{1-\me[\eee^{-\xi}]} + \varepsilon.$$ As for the lower bound, using
\begin{equation*}
m \leq W(t_{k,n})-W(t_{k-1,n}) \leq V(t_{k,n})-W(t_{k-1,n})= V(t_{k,n}) - V(t_{k-1,n}) +  (V(t_{k-1,n})-W(t_{k-1,n}))
\end{equation*}
we obtain $$V(t_{k,n}) - V(t_{k-1,n})\geq m -\Big(\frac{1}{1-\me[\eee^{-\xi}]} + \varepsilon\Big)\geq 1.$$

Now we have to check that, for all $\varepsilon>0$, the sequence $(j_n \exp(-\varepsilon({\rm Var}[\hat{N}(t_n)])^{1/2}))$ is summable. We start by discussing the cases (A), (B) and (C). The sequence $(j_n)$ then exhibits at most polynomial growth: $$j_n\leq m^{-1}(W(t_{n+1})-W(t_n))\leq m^{-1}V(t_{n+1})~\sim~ m^{-1}n^{\nu_\varrho(1-\kappa)/(\nu_\varrho-1)},\quad n\to\infty,$$ where $\nu=2$ in the cases (A) and (B) and $\nu=\alpha$ in the case (C). Since $t_n\sim \mu n^{\nu_\varrho(1-\kappa)/(\nu_\varrho-1)}$ and, by Lemma \ref{var_12}, $t\mapsto {\rm Var}[\hat N(t)]$ exhibits the polynomial growth, we conclude that $n\mapsto {\rm Var}[\hat N(t_n)]$ grows polynomially. Thus, the sequence in focus is indeed summable in the cases (A), (B) and (C). In the case (D), relation \eqref{eq:first moment} ensures that $j_n$ is of at most polynomial growth in the argument $\exp((n+1)^{(1-\kappa)/(\varrho+1)})\sim \exp(n^{(1-\kappa)/(\varrho+1)})=t_n$. Also, by Lemma \ref{var_12}, $n\mapsto {\rm Var}[\hat N(t_n)]$ is of polynomial growth in $t_n$. Thus, the sequence $(j_n \exp(-\varepsilon({\rm Var}[\hat{N}(t_n)])^{1/2}))$ is summable in the case (D), too.

\noindent {\sc Condition (B1)}. By Lemma \ref{var_12}, the relation $\lim_{t\to\infty}({\rm Var}[\hat N(t-1)]/{\rm Var}[\hat N(t)])=1$ holds in the cases (A), (B) and (C) (in which $\nu>1$) and the relation $\lim_{t\to\infty}(\log {\rm Var}[\hat N(t-1)]/\log {\rm Var}[\hat N(t)])=1$ holds in the case (D) (in which $\nu=1$).

\noindent {\sc Condition (B2)}. We first treat the cases (A), (B) and (C). Recall that the functions $c_\alpha$ were defined in Section \ref{sect:prelim}, for instance, $c_2(t)=\sigma t^{1/2}$ in the case (A). 

Fix any $x>0$ and put $$c(t)=\lfloor \mu^{-1}t-\mu^{-1-1/\alpha}c_\alpha(t)x \rfloor\quad\text{and}\quad d(t)=\lfloor \mu^{-1}t+\mu^{-1-1/\alpha}c_\alpha(t)x \rfloor$$ for large $t$ satisfying  $\mu^{-1}t-\mu^{-1-1/\alpha}c_\alpha(t)x\geq 1$. Write
\begin{multline*}
{\rm Var}\Big[\sum_{k=c(t)}^{d(t)-1}\1_{\{\hat S_k \leq t\}}\Big]=\int_{c(t)}^{d(t)}\mmp\{S_{\lfloor x\rfloor}\leq t\}\mmp\{ S_{\lfloor x\rfloor} > t \}{\rm d}x\\=\mu^{-1-1/\alpha} c_\alpha(t) \int_{\frac{c(t)-\mu^{-1}t}{ \mu^{-1-1/\alpha}c_\alpha(t)}}^{\frac{d(t)-\mu^{-1}t}{\mu^{-1-1/\alpha}c_\alpha(t)}} \mmp\{S_{\lfloor \mu^{-1}t + \mu^{-1-1/\alpha}c_\alpha(t) y\rfloor} \leq t\}\mmp\{S_{\lfloor \mu^{-1}t + \mu^{-1-1/\alpha} c_\alpha(t)y\rfloor} > t\}{\rm d}y.
\end{multline*}
According to \eqref{eq:basic conv}, for each fixed $y\in\mr$,
		\begin{equation*}
			\lim_{t\to\infty}\mmp\{S_{\lfloor \mu^{-1}t +\mu^{-1-1/\alpha}c_\alpha(t)y\rfloor} \leq t\}= \lim_{t\to\infty}\mmp\{\tau(t) > \lfloor \mu^{-1}t +\mu^{-1-1/\alpha}c_\alpha(t)y\rfloor \}= \mmp\{Z_\alpha >y\}
		\end{equation*}
and
\begin{multline*}
\lim_{t\to\infty}\mmp\{S_{\lfloor \mu^{-1}t +\mu^{-1-1/\alpha}c_\alpha(t)y\rfloor} > t\}= \lim_{t\to\infty}\mmp\{\tau(t) \leq \lfloor \mu^{-1}t +\mu^{-1-1/\alpha}c_\alpha(t)y\rfloor \}= \mmp\{Z_\alpha \leq y\}.
\end{multline*}
Invoking the Lebesgue dominated convergence theorem we infer
		\begin{multline*}
			\lim_{t\to\infty}\int_{\frac{c(t)-\mu^{-1}t}{\mu^{-1-1/\alpha}c_\alpha(t)}}^{\frac{d(t)-\mu^{-1}t}{\mu^{-1-1/\alpha}c_\alpha(t)}} \mmp\{S_{\lfloor \mu^{-1}t +\mu^{-1-1/\alpha}c_\alpha(t)y\rfloor} \leq t\}\mmp\{S_{\lfloor \mu^{-1}t +\mu^{-1-1/\alpha}c_\alpha(t)y\rfloor} > t\}{\rm d}y \\= \int_{-x}^{x} \mmp\{Z_\alpha>y\}\mmp\{Z_\alpha \leq y\}{\rm d}y.
		\end{multline*}
Thus, $${\rm Var}\Big[\sum_{k=c(t)}^{d(t)} \1_{\{\hat S_k \leq t\}}\Big]~\sim~ \mu^{-1-1/\alpha}c_\alpha(t) \int_{-x}^{x} \mmp\{Z_\alpha>y\}\mmp\{Z_\alpha \leq y\}{\rm d}y,\quad t\to\infty.$$ This ensures that given $\varsigma\in (0,1)$ there exists the unique value $x > 0$ such that, as $t\to\infty$, $${\rm Var}\Big[\sum_{k=c(t)}^{d(t)}\1_{\{\hat S_k \leq t\}}\Big]~\sim~ (1 - \varsigma)\mu^{-1-1/\alpha}c_\alpha(t) \int_\mr \mmp\{Z_\alpha>y\}\mmp\{Z_\alpha \leq y\}{\rm d}y~\sim~ (1-\varsigma){\rm Var}[\hat{N}(t)].$$ We have shown that in the cases (A), (B) and (C) the second part of condition (B2) holds.

Now we check the first part of condition (B2) in the cases (A), (B) and (C). To this end, it suffices to show that $\lim_{t\to\infty}(c(\tau_{n+1}) - d(\tau_n)) = +\infty$ or, equivalently, that $$\lim_{n\to\infty}\left( \mu^{-1}(\tau_{n+1}-\tau_n)-\mu^{-1-1/\alpha}x\left(c_\alpha(\tau_{n+1}\right)+c_\alpha(\tau_n))\right) = +\infty.$$ Recall that, for $\alpha\in (1,2]$, the function $c_\alpha$ varies regularly at $\infty$ of index $1/\alpha$. Put $K(s) := \inf\{t\geq 0: {\rm Var}[\hat{N}(t)]> s^{(1+\gamma)/(\nu-1)}\}$ for $s>0$, where $\nu=2$ in the cases (A) and (B) and $\nu=\alpha$ in the case (C). By Lemma \ref{var_12}, the function $t\mapsto {\rm Var} [\hat{N}(t)]$ is regularly varying at $\infty$ of index $1/\alpha$. Hence, by Theorem 1.5.12 in \cite{Bingham+Goldie+Teugels:1989}, $K(s)=s^\rho L(s)$ for $s>0$, where $\rho:=(1+\gamma)\alpha/(\nu-1)$ and $L$ varies slowly at $\infty$. Since $\tau_n = K(n)$ for $n \in \mn$, we conclude that $\lim_{n\to\infty}(c_\alpha(\tau_{n+1})/c_\alpha(\tau_n))=1$ and $\lim_{n\to\infty}c_\alpha(\tau_n)=+\infty$. Thus, it is enough to prove that $\lim_{n\to\infty}(\tau_{n+1}-\tau_n)/c_\alpha(\tau_n)=+\infty$.

To check this, we use a representation $$\tau_{n+1}-\tau_n = ((n+1)^\rho - n^\rho)L(n+1) + n^\rho(L(n+1)-L(n)).$$ Observe that $$\frac{((n+1)^\rho - n^\rho)L(n+1)}{c_\alpha(\tau_n)}~\sim~ \frac{\rho n^{\rho-1} L(n)}{c_\alpha(\tau_n)}~\sim~ n^{\frac{(1+\gamma)\alpha}{\nu-1} - \frac{(1+\gamma)}{\nu-1}-1}L_1(n) = n^\gamma L_1(n)~\to~\infty,\quad n\to\infty.$$ Here, $L_1$ is a slowly varying function defined by the ratio of $\rho L$ and the slowly varying factor of $c_\alpha(\tau_n)$. When working with the second summand in the representation of $\tau_{n+1}-\tau(n)$ we can and do assume that the function $L$ is differentiable and $\lim_{t\to\infty}(tL^\prime(t)/L(t))=0$, see Theorem 1.3.3 in  \cite{Bingham+Goldie+Teugels:1989}. By the mean value theorem for differentiable functions, there exists some $\theta_n \in [n, n+1]$ satisfying $$\frac{n(L(n+1)-L(n))}{L(n)}=\frac{nL^\prime(\theta_n)}{L(n)} =  \frac{\theta_nL^\prime(\theta_n)}{L(\theta_n)}\frac{n}{\theta_n}\frac{L(\theta_n)}{L(n)}~\to~0,\quad n\to\infty.$$ Here, the last factor converges to $1$ as $n\to\infty$ according to the uniform convergence theorem for slowly varying functions (Theorem 1.2.1 in \cite{Bingham+Goldie+Teugels:1989}). Thus, $$\frac{n^\rho(L(n+1)-L(n))}{c_\alpha(\tau_n)}=o(n^\gamma L_1(n)),\quad n\to\infty,$$ and the first part of condition (B2) holds in the cases (A), (B) and (C).

In the case (D), we fix $x \in (0,1)$ and put $c(t)=\lfloor x/\mmp\{\xi>t\}\rfloor$ and $d(t)=\lfloor 1/(x\mmp\{\xi>t\}) \rfloor$ for large $t$ satisfying $x/\mmp\{\xi>t\}\geq 1$. By the same reasoning as above we infer with the help of \eqref{eq:basic conv2},
$${\rm Var}\Big[\sum_{k=c(t)}^{d(t)-1}\1_{\{\hat S_k \leq t\}}\Big]~\sim~ \frac{\int_x^{1/x}\mmp\{S_\alpha > y\} \mmp\{S_\alpha\leq y\}{\rm d}y}{\mmp\{\xi>t\}},\quad t\to\infty.$$ As a consequence, given $\varsigma\in (0,1)$ there exists the unique value $x \in (0,1)$ such that $${\rm Var}\Big[\sum_{k=c(t)}^{d(t)}\1_{\{\hat S_k \leq t\}}\Big]\sim (1-\varsigma)\frac{\int_0^\infty \mmp\{S_\alpha > y\} \mmp\{S_\alpha\leq y\}{\rm d}y}{\mmp\{\xi>t\}}~\sim~(1 - \varsigma){\rm Var} [\hat{N}(t)],\quad t \to \infty.$$ Thus, the second part of condition (B2) holds in the case (D).

To check the first part of condition (B2) in the case (D) (that is, $\nu=1$ and $q=0$) it is enough to show that $\lim_{t\to\infty}(c(\tau_{n+1}) - d(\tau_n)) = +\infty$ or, equivalently,
\begin{equation*}
\lim_{n\to\infty}\left(\frac{x}{\mmp\{\xi>\tau_{n+1}\}}-\frac{1}{x\mmp\{\xi>\tau_n\}}\right) = \lim_{n\to\infty}\frac{1}{\mmp\{\xi>\tau_n\}} \left(\frac{x\mmp\{\xi>\tau_n\}}{\mmp\{\xi>\tau_{n+1}\}}-\frac{1}{x}\right) =  +\infty.
\end{equation*}
In the present case, $\tau_n= \inf \{t>0: {\rm Var}[\hat{N}(t)] > \exp(n^{1+\gamma})\}$ for some $\gamma>0$, so that $\lim_{n\to\infty}(\mmp\{\xi>\tau_n\}/\mmp\{\xi>\tau_{n+1}\})=+\infty$. Hence, the first part of condition (B2) holds in the case (D), too.

We have shown that conditions (A1)-(A5), (B1) and (B2) of Proposition \ref{prop:Buraetal} hold. Now Theorem \ref{main1} follows by an application of Proposition \ref{prop:Buraetal}. We stress that $\hat N(t)$ only satisfies a law of the iterated logarithm in the case (D) (in which $\nu=1$). In the cases (A), (B) and (C) (in which $\nu>1$) the a.s. asymptotic behavior of $\hat N(t)$ is driven by laws of the single logarithm.

\end{proof}

\noindent \textbf{Acknowledgment.} I. Feshchenko gratefully acknowledges the support by the National Research Foundation of Ukraine (project 2023.03/0059 ‘Contribution to modern theory of random series’). A. Iksanov gratefully acknowledges the support by the MOHRSS program (H 20240850).

\end{document}